\DeclareMathOperator{\Gal}{Gal}
\DeclareMathOperator{\Img}{Im}
\DeclareMathOperator{\Ker}{Ker}
\DeclareMathOperator{\Res}{Res}
\DeclareFontFamily{U}{wncy}{}
\DeclareFontShape{U}{wncy}{m}{n}{<->wncyr10}{}
\DeclareSymbolFont{mcy}{U}{wncy}{m}{n}
\DeclareMathSymbol{\Sha}{\mathord}{mcy}{"58}
\DeclareMathSymbol{\sha}{\mathord}{mcy}{"78}
\begin{document}

\newtheorem{thm}{Theorem}[section]
\newtheorem{cor}[thm]{Corollary}
\newtheorem{lem}[thm]{Lemma}
\newtheorem{prop}[thm]{Proposition}
\newtheorem{defin}[thm]{Definition}
\newtheorem{exam}[thm]{Example}
\newtheorem{examples}[thm]{Examples}
\newtheorem{rem}[thm]{Remark}
\newtheorem{case}{\sl Case}
\newtheorem{claim}{Claim}
\newtheorem{question}[thm]{Question}
\newtheorem{conj}[thm]{Conjecture}
\newtheorem*{notation}{Notation}
\swapnumbers
\newtheorem{rems}[thm]{Remarks}
\newtheorem*{acknowledgment}{Acknowledgment}
\newtheorem*{thmno}{Theorem}

\newtheorem{questions}[thm]{Questions}
\numberwithin{equation}{section}

\newcommand{\gr}{\mathrm{gr}}
\newcommand{\inv}{^{-1}}
\newcommand{\isom}{\cong}
\newcommand{\dbC}{\mathbb{C}}
\newcommand{\F}{\mathbb{F}}
\newcommand{\dbN}{\mathbb{N}}
\newcommand{\Q}{\mathbb{Q}}
\newcommand{\dbR}{\mathbb{R}}
\newcommand{\dbU}{\mathbb{U}}
\newcommand{\Z}{\mathbb{Z}}
\newcommand{\calG}{\mathcal{G}}
\newcommand{\K}{\mathbb{K}}

%%%%%%%%%%%%%%%%%
% New commands for our things - such as cocycles and so on

\newcommand{\hac}{\hat c}
\newcommand{\hatheta}{\hat\theta}
%%%%%%%%%%%%%%%%%%%%%%%%%%%%%%%%%%

\title[Galois-theoretic features for 1-smooth pro-$p$ groups]{Galois-theoretic features \\ for 1-smooth pro-$p$ groups}
\author{Claudio Quadrelli}
\address{Department of Mathematics and Applications, University of Milano Bicocca, 20125 Milan, Italy EU}
\email{claudio.quadrelli@unimib.it}
\date{\today}

\begin{abstract}
Let $p$ be a prime.
 A pro-$p$ group $G$ is said to be 1-smooth if it can be endowed with a continuous representation $\theta\colon G\to\mathrm{GL}_1(\Z_p)$ such that every open subgroup $H$ of $G$, together with the restriction $\theta\vert_H$, satisfies a formal version of Hilbert 90.
 We prove that every 1-smooth pro-$p$ group contains a unique maximal closed abelian normal subgroup, in analogy with a result by Engler and Koenigsmann on maximal pro-$p$ Galois groups of fields, and that if a 1-smooth pro-$p$ group is solvable, then it is locally uniformly powerful, in analogy with a result by Ware on maximal pro-$p$ Galois groups of fields.
 Finally we ask whether 1-smooth pro-$p$ groups satisfy a ``Tits' alternative''.
\end{abstract}

\subjclass[2010]{Primary 12G05; Secondary 20E18, 20J06, 12F10}

\keywords{Galois cohomology, Maximal pro-$p$ Galois groups, Bloch-Kato conjecture, Kummerian pro-$p$ pairs, Tits' alternative}

\maketitle
%%%%%%%%%%%%%%%%%%%%%%%%%%%%%%%%%%%%%%%%%%%%%%%%%

\section{Introduction}
\label{sec:intro}

Throughout the paper $p$ will denote a prime number, and $\K$ a field containing a root of unity of order $p$.
Let $\K(p)$ denote the compositum of all finite Galois $p$-extensions of $\K$.
The maximal pro-$p$ Galois group of $\K$, denoted by $G_{\K}(p)$, is the Galois group $\Gal(\K(p)/\K)$, and it coincides with the maximal pro-$p$ quotient of the absolute Galois group of ${\K}$.
Characterising maximal pro-$p$ Galois groups of fields among pro-$p$ groups is one of the most important --- and challenging --- problems in Galois theory.
One of the obstructions for the realization of a pro-$p$ group as maximal pro-$p$ Galois group for some field $\K$ is given by the Artin-Scherier theorem: the only finite group realizable as $G_{\K}(p)$ is the cyclic group of order 2 (cf. \cite{becker}).

The proof of the celebrated {\sl Bloch-Kato conjecture}, completed by M.~Rost and V.~Voevodsky with Ch.~Weibel's ``patch'' (cf. \cite{HW:book,voev,weibel})
provided new tools to study absolute Galois groups of field and their maximal pro-$p$ quotients (see, e.g., \cite{cem,cmq:fast,cq:bk,qw:cyc}).
In particular, the now-called Norm Residue Theorem implies that the {\sl $\Z/p$-cohomology algebra} of a maximal pro-$p$ Galois group $G_{\K}(p)$
\[
 H^\bullet(G_{\K}(p),\Z/p):=\bigoplus_{n\geq 0} H^n(G_{\K}(p),\Z/p),
\]
with $\Z/p$ a trivial $G_{\K}(p)$-module and endowed with the cup-product, is a quadratic algebra: 
i.e., all its elements of positive degree are combinations of products of elements of degree 1, and its defining relations are homogeneous relations of degree 2 (see \S~\ref{ssec:BK} below).
For instance, from this property one may recover the Artin-Schreier obstruction (see, e.g., \cite[\S~2]{cq:bk}).

More recently, a formal version of {\sl Hilbert 90} for pro-$p$ groups was employed 
to find further results on the structure of maximal pro-$p$ Galois groups (see \cite{eq:kummer,cq:noGal,qw:cyc}).
A pair $\calG=(G,\theta)$ consisting of a pro-$p$ group $G$ endowed with a continuous representation $\theta\colon G\to \mathrm{GL}_1(\Z_p)$ is called a {\sl pro-$p$ pair}.
For a pro-$p$ pair $\calG=(G,\theta)$ let $\Z_p(1)$ denote the continuous left $G$-module isomorphic to $\Z_p$ as an abelian pro-$p$ group, with $G$-action induced by $\theta$ (namely, $g.v=\theta(g)\cdot v$ for every $v\in\Z_p(1)$).
The pair $\calG$ is called a {\sl Kummerian pro-$p$ pair} if the canonical map 
\[
  H^1(G,\Z_p(1)/p^n)\longrightarrow H^1(G,\Z_p(1)/p),
\]
is surjective for every $n\geq1$.
Moreover the pair $\calG$ is said to be a {\sl 1-smooth} pro-$p$ pair if every closed subgroup $H$, endowed with the restriction $\theta\vert_H$, gives rise to a Kummerian pro-$p$ pair (see Definition~\ref{defin:kummer}).
By Kummer theory, the maximal pro-$p$ Galois group $G_{\K}(p)$ of a field $\K$, together with the pro-$p$ {\sl cyclotomic character} $\theta_{\K}\colon G_{\K}(p)\to\mathrm{GL}_1(\Z_p)$
(induced by the action of $G_{\K}(p)$ on the roots of unity of order a $p$-power lying in $\K(p)$)
gives rise to a 1-smooth pro-$p$ pair $\calG_{\K}$ (see Theorem~\ref{thm:galois}).

In \cite{dcf:lift} --- driven by the pursuit of an ``explicit'' proof
of the Bloch-Kato conjecture as an alternative to the proof by Voevodsky ---
 C.~De~Clerq and M.~Florence introduced the 1-smoothness property, and formulated the so-called ``Smoothness Conjecture'': namely, that it is possible to deduce the
surjectivity of the norm residue homomorphism (which is acknowledged to be the ``hard part'' of the Bloch-Kato conjecture) from the fact that 
$G_{\K}(p)$ together with the pro-$p$ cyclotomic character is a 1-smooth pro-$p$ pair
(see \cite[Conj.~14.25]{dcf:lift} and \cite[\S~3.1.6]{birs}, and Question~\ref{conj:smooth} below).

In view of the Smoothness Conjecture, it is natural to ask which properties of maximal pro-$p$ Galois groups of fields arise also for 1-smooth pro-$p$ pairs.
For example, the Artin-Scherier obstruction does: the only finite $p$-group which may complete into a 1-smooth pro-$p$ pair is the cyclic group $C_2$ of order 2, together with the non-trivial representation $\theta\colon C_2\to\{\pm1\}\subseteq\mathrm{GL}_1(\Z_2)$ (see Example~\ref{ex:Galois} below).

A pro-$p$ pair $\calG=(G,\theta)$ comes endowed with a distinguished closed subgroup: the {\sl $\theta$-center} $Z(\calG)$ of $\calG$, defined by
\[
 Z(\calG)=\left\langle\: h\in\Ker(\theta)\:\mid\: ghg^{-1}=h^{\theta(g)}\;\forall\:g\in G\:\right\rangle.
\]
This subgroup is abelian, and normal in $G$.
In \cite{EK}, A.~Engler and J.~Koenigsmann showed that if the maximal pro-$p$ Galois group $G_{\K}(p)$ of a field $\K$ is not cyclic then it has a unique maximal normal abelian closed subgroup (i.e., one containing all normal abelian closed subgroups of $G_{\K}(p)$), which coincides with the $\theta_{\K}$-center $Z(\calG_{\K})$, and the short exact sequence of pro-$p$ groups 
\[\xymatrix{\{1\}\ar[r] &  Z(\calG_{\K})\ar[r] &  G_{\K}(p)\ar[r] &  G_{\K}(p)/Z(\calG_{\K})\ar[r] & \{1\}}
\] splits.
We prove a group-theoretic analogue of Engler-Koenigsmann's result for 1-smooth pro-$p$ groups.

\begin{thm}\label{thm:Z intro}
 Let $G$ be a torsion-free pro-$p$ group, $G\not\simeq\Z_p$, endowed with a representation $\theta\colon G\to\mathrm{GL}_1(\Z_p)$ such that $\calG=(G,\theta)$ is a 1-smooth pro-$p$ pair.
 Then $Z(\calG)$ is the unique maximal normal abelian closed subgroup of $G$, and the quotient $G/Z(\calG)$ is a torsion-free pro-$p$ group.
\end{thm}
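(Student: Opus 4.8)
The plan is to reduce the whole first assertion to the single statement that \emph{every closed normal abelian subgroup $A$ of $G$ is contained in $Z(\calG)$}; since $Z(\calG)$ is itself closed, normal and abelian, this shows at once that it is the unique maximal such subgroup. Throughout I use that every closed subgroup $H\leq G$, equipped with $\theta\vert_H$, is again $1$-smooth, so I may pass freely to the $2$-generated subgroups $\overline{\langle a,g\rangle}$ for $a\in A$, $g\in G$. For a fixed $a\in A$ two things must be shown: that $a\in\Ker(\theta)$, and that $gag^{-1}=a^{\theta(g)}$ for all $g\in G$; together these are exactly the condition $a\in Z(\calG)$.

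The main tool is the following sub-lemma, proved by a direct cohomology computation: \emph{an abelian torsion-free $1$-smooth pair $(H,\psi)$ with $\psi\neq 1$ is procyclic, $H\cong\Z_p$.} If it were not, $H$ would contain a direct summand $H'\cong\Z_p^2$ with $\psi\vert_{H'}\neq 1$, and since $\psi\vert_{H'}$ takes values in the torsion-free group $1+p\Z_p$ one computes $H^1(H',\Z_p(1))\cong\Z/p$ but $H^1(H',\Z_p(1)/p)\cong(\Z/p)^2$, so the Kummer map $\Z/p\to(\Z/p)^2$ cannot be onto, contradicting Kummerianity of $H'$. Granting this, I first prove $A\subseteq\Ker(\theta)$. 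If not, choosing $a_0\in A$ whose image generates $\theta(A)\cong\Z_p$ and applying the sub-lemma to the abelian groups $\overline{\langle a_0,a'\rangle}\subseteq A$ forces $A=\overline{\langle a_0\rangle}\cong\Z_p$; writing $ga_0g^{-1}=a_0^{\,c(g)}$ and comparing $\theta(ga_0g^{-1})=\theta(a_0)$ with $\theta(a_0^{\,c(g)})=\theta(a_0)^{c(g)}$ gives $c(g)=1$ for all $g$, so $a_0$ is central and, by maximality, $Z(G)=A\cong\Z_p$. Now for each $g$ the group $\overline{\langle a_0,g\rangle}$ is abelian with $\theta\neq 1$, hence procyclic, so $a_0$ and $g$ always lie in a common copy of $\Z_p$; a Frattini-quotient argument, using that $G\not\cong\Z_p$ forces $\dim_{\F_p}G/\Frat(G)\geq 2$, then forces $G$ itself procyclic, a contradiction.

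With $A\subseteq\Ker(\theta)$ in hand I deduce the cyclotomic conjugation relation. Fix $a\in A$, $g\in G$ and set $H=\overline{\langle a,g\rangle}$, $B=A\cap H$; then $B$ is a closed normal abelian subgroup of $H$ lying in $\Ker(\theta)$, and $H/B$ is procyclic. Feeding $1\to B\to H\to H/B\to 1$ into the Lyndon--Hochschild--Serre spectral sequence and using $\cd(H/B)=1$ yields, compatibly with reduction mod $p$, the exact sequence
\[
 0\to H^1(H/B,\Z_p(1)/p^n)\to H^1(H,\Z_p(1)/p^n)\to \Hom(B,\Z_p(1)/p^n)^{H/B}\to 0.
\]
Since $B\subseteq\Ker(\theta)$ acts trivially on $\Z_p(1)$, the right-hand invariants are controlled by the conjugation action of $g$ on $B$ twisted by $\theta(g)$, and comparing the terms at level $n$ and level $1$ shows that the Kummer map is surjective exactly when $g$ acts on $B$ as the scalar $\theta(g)$. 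Hence $gag^{-1}=a^{\theta(g)}$, so $a\in Z(\calG)$, which completes the first assertion.

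For the torsion-freeness of $G/Z(\calG)$ it suffices to show $Z(\calG)$ is isolated: $x^p\in Z(\calG)$ implies $x\in Z(\calG)$. From $x^p\in Z(\calG)\subseteq\Ker(\theta)$ and torsion-freeness of the pro-$p$ image of $\theta$ one gets $\theta(x)=1$, so $x$ centralizes $Z(\calG)$; moreover $(gxg^{-1})^p=gx^pg^{-1}=(x^p)^{\theta(g)}=(x^{\theta(g)})^p$ for every $g$. Passing from equality of $p$-th powers to $gxg^{-1}=x^{\theta(g)}$ requires uniqueness of $p$-th roots, which I would extract from the local uniform (powerful) structure of the finitely generated subgroups involved; with it, $x$ satisfies the defining relations of $Z(\calG)$. \textbf{The genuine technical heart}, and the step I expect to be hardest, is the reduction in the second paragraph: excluding a central copy of $\Z_p$ carrying a nontrivial $\theta$ must ultimately be settled by descending to a finitely generated subgroup and contradicting $1$-smoothness there, and it is this finitely generated terminal case — rather than the cohomological computations — where the real obstacle lies.
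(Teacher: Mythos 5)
The fatal gap is in your third paragraph, the step $gag^{-1}=a^{\theta(g)}$, and it is not where you predicted the difficulty would be. The equivalence you assert --- that surjectivity of the maps $\Hom(B,\Z_p(1)/p^n)^{H/B}\to\Hom(B,\Z_p(1)/p)^{H/B}$ for all $n$ holds \emph{exactly when} $g$ acts on $B$ as the scalar $\theta(g)$ --- is false. Writing $M$ for the conjugation action of $g$ on $B$ and $C=\overline{(M-\theta(g))B}$, the invariants at level $n$ are precisely $\Hom(B/C,\Z_p(1)/p^n)$, and surjectivity for every $n$ is equivalent to $B/C$ being torsion-free, which does not force $M=\theta(g)\cdot\id$. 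The Heisenberg group of Example~\ref{ex:1-smooth}--(d) is a concrete counterexample to your ``exactly when'': take $H$ with generators $x,y$ and $z=[x,y]$ central, $B=\overline{\langle y,z\rangle}\simeq\Z_p^2$ normal, $H/B\simeq\Z_p$, $\theta\equiv\mathbf{1}$; then $C=\overline{\langle z\rangle}$ and $B/C\simeq\Z_p$ is torsion-free, the pair $(H,\mathbf{1})$ is Kummerian at every level (indeed $H/H'\simeq\Z_p^2$), both of your exact sequences exist and all vertical maps are surjective --- and yet $x$ does not act on $B$ by the scalar $1$. The moral is structural: your argument only ever uses Kummerianity of $H=\overline{\langle a,g\rangle}$ itself, and Kummerianity of $H$ alone can never yield the conclusion, since the Heisenberg group satisfies it. One must exploit $1$-smoothness, i.e.\ Kummerianity of \emph{proper subgroups} of $H$. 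This is exactly what the paper does in Proposition~\ref{prop:prop}: setting $t=y^p$, $z=[x,y]$, it passes to the open subgroup $U=\overline{\langle x,z,t\rangle}$, uses $[x,t]=z^p$ together with Proposition~\ref{prop:kummer} (freeness of $\Ker(\theta\vert_U)/K(\Res_U(\calG))$) to get $z\equiv t^{\lambda/p}\bmod\Phi(U)$, concludes that $U$ is $2$-generated and powerful, and then invokes Proposition~\ref{prop:pwfl-1smooth} to make $\Res_U(\calG)$ $\theta\vert_U$-abelian, whence $K(\Res_U(\calG))=\{1\}$ and $[x,y]=y^{1-\theta(x)^{-1}}$ exactly.

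There are two further gaps, and one misdiagnosis. First, in the same paragraph you assume $\cd(H/B)=1$; but $H/B$ is only known to be procyclic and could a priori be finite cyclic (excluding torsion in such quotients is part of what is being proved), in which case $H^2(H/B,\Z/p)\neq0$ and your rows need not be right-exact. Second, in your last paragraph the passage from $(gxg^{-1})^p=(x^{\theta(g)})^p$ to $gxg^{-1}=x^{\theta(g)}$ is unjustified: unique extraction of $p$-th roots is not a property of general torsion-free pro-$p$ groups, and the ``locally uniformly powerful structure'' you want to invoke is not available --- establishing it for the relevant subgroups is essentially the content of the theorems being proved. The paper instead takes $y$ with $y^p\in Z(\calG)$, forms $S=\overline{\langle y,[x,y]\rangle}\subseteq\Ker(\theta)$, uses that $S/S'$ is torsion-free (Remark~\ref{rem:abs torfree}) to deduce $[x,y]\equiv y^{1-\theta(x)^{-1}}\bmod\Phi(S)$, hence $S\simeq\Z_p$, $S'=\{1\}$, and the relation holds on the nose. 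By contrast, your second paragraph --- which you flag as the ``genuine technical heart'' --- is essentially fine: the sub-lemma and its $H^1$ computation are correct (they amount to Remark~\ref{rem:unique} for abelian groups), and the Frattini-quotient step is easily completed (if $a_0\in\Phi(G)$, then every $g\notin\Phi(G)$ satisfies $a_0\in\overline{\langle g\rangle}$, so $\Ker(\theta)\subseteq\Phi(G)$ and $G/\Phi(G)$ is a quotient of the procyclic group $\Img(\theta)$, forcing $d(G)\leq1$). So the real obstacle lies in the conjugation step, not in excluding a central $\Z_p$ with non-trivial $\theta$.
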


In \cite{ware}, R.~Ware proved the following result on maximal pro-$p$ Galois groups of fields:
if $G_{\K}(p)$ is solvable, then it is locally uniformly powerful, i.e., $G_{\K}(p)\simeq A\rtimes\Z_p$, where $A$ is a free abelian pro-$p$ group, and the right-side factor acts by scalar multiplication by a unit of $\Z_p$ (see \S~\ref{ssec:powerful}). 
%is torsion-free and for every finitely generated closed subgroup $H\subseteq G_{\K}(p)$ one has $[H,H]\subseteq H^p$ ($[H,H]\subseteq H^4$ if $p=2$), where $[H,H]$ denotes the closed commutator subgroup of $H$ and $H^p$ denotes the closed subgroup of $H$ generated by the $p$-th powers of the elements of $H$ (see Definition~\ref{defin:pwfl}).
We prove that the same property holds also for 1-smooth pro-$p$ groups.

\begin{thm}\label{thmA:intro}
 Let $G$ be a solvable torsion-free pro-$p$ group, endowed with a representation $\theta\colon G\to\mathrm{GL}_1(\Z_p)$ such that $\calG=(G,\theta)$ is a 1-smooth pro-$p$ pair.
 Then $G$ is locally uniformly powerful.
\end{thm}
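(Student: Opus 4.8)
The plan is to reduce the statement to the structure theory provided by Theorem~\ref{thm:Z intro} and then to exploit the rigidity of the $\theta$-action. First I would dispose of the abelian case: if $G$ is abelian then, being torsion-free, it is a free abelian pro-$p$ group, which is trivially of the form $A\rtimes\Z_p$ (take the right-hand factor to act by the unit $1$). So assume from now on that $G$ is non-abelian; in particular $G\not\simeq\Z_p$, so Theorem~\ref{thm:Z intro} applies and $Z:=Z(\calG)$ is the unique maximal normal abelian closed subgroup of $G$, with $G/Z$ torsion-free. Since $G$ is solvable and non-trivial, the last non-trivial term of its derived series is a non-trivial normal abelian closed subgroup, whence $Z\neq\{1\}$. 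Finally I record the one input coming from 1-smoothness that I shall use: by the very definition of the $\theta$-center, every generator $h$ of $Z$ satisfies $ghg\inv=h^{\theta(g)}$, and since $Z$ is abelian this extends to all of $Z$; thus $G$ acts on $Z$ by the scalar $\theta$, and consequently $C_G(Z)=\Ker(\theta)$ (here we use $Z\neq\{1\}$ and that $Z$ is torsion-free, so a scalar $\theta(g)$ fixes $Z$ only if $\theta(g)=1$).

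The heart of the argument is the self-centralizing property $C_G(Z)=Z$, and this is the step I expect to be the main obstacle. Set $C:=C_G(Z)$; it is closed and normal in $G$, and contains $Z$. Its center $Z(C)$ is characteristic in $C$, hence normal in $G$, and abelian, so by maximality $Z(C)=Z$. Suppose, for contradiction, that $C\neq Z$. Using solvability I would pass to $\bar G=G/Z$ and take $\bar D$ to be the last non-trivial term of the derived series of the non-trivial solvable closed normal subgroup $C/Z$; being characteristic in $C/Z$ it is normal in $\bar G$ and abelian. Its preimage $D$ in $G$ then satisfies $[D,D]\subseteq Z$ and, since $D\subseteq C$, also $[D,Z]=\{1\}$, so $D$ is nilpotent of class $\le 2$ with $Z(D)=Z$ (again by maximality). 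The commutator thus induces a $G$-equivariant, non-degenerate alternating pairing on $D/Z$ with values in $Z$. Now the pro-$p$ hypothesis enters: $G$ is a pro-$p$ group acting on the non-zero $\F_p$-vector space $(D/Z)/\Frat(D/Z)$, so it fixes a non-zero vector; as an alternating form vanishes on any line, a $G$-invariant line is isotropic, and lifting it back produces a closed subgroup $E$ with $Z\subsetneq E\subseteq D$, normal in $G$ and abelian. This contradicts the maximality of $Z$, whence $C_G(Z)=Z$. The delicate points here, which I would handle with care, are the profinite (topological Nakayama) arguments guaranteeing that $D/Z$ has a non-zero Frattini quotient on which the fixed-point principle applies, and the verification that the lifted line really yields a \emph{closed} abelian subgroup.

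Combining the two computations of the centralizer gives $Z=C_G(Z)=\Ker(\theta)$, so that $G/Z\simeq\theta(G)$ is a torsion-free pro-$p$ subgroup of $\mathrm{GL}_1(\Z_p)=\Z_p^\times$. Since the torsion of $\Z_p^\times$ is prime to $p$ (for $p$ odd), respectively equal to $\{\pm1\}$ (for $p=2$), any torsion-free pro-$p$ subgroup lies in the procyclic part $1+p\Z_p$ (resp. $1+4\Z_2$), and is therefore isomorphic either to $\Z_p$ or to the trivial group; it is non-trivial, because a trivial $\theta(G)$ would force $Z=\Ker(\theta)=G$ and hence $G$ abelian. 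Thus we obtain an extension $\{1\}\to Z\to G\to\Z_p\to\{1\}$ on which $G$ acts by the scalar $\theta$. To split it, choose $x\in G$ mapping to a topological generator of $\Z_p$; as $G$ is torsion-free, $\overline{\langle x\rangle}\simeq\Z_p$, and it maps onto $G/Z\simeq\Z_p$, forcing $\overline{\langle x\rangle}\cap Z=\{1\}$. Hence $G=Z\rtimes\overline{\langle x\rangle}$, where $\overline{\langle x\rangle}\simeq\Z_p$ acts on the free abelian pro-$p$ group $Z$ by multiplication by the unit $\theta(x)\in\Z_p^\times$. This is precisely the assertion that $G$ is locally uniformly powerful, completing the plan.
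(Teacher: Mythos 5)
Your reduction has a genuine gap at exactly the step you identify as the heart of the matter: the proof that $C_G(Z)=Z$, equivalently that $\Ker(\theta)$ is abelian. That step rests on a fixed-point principle which is false in the generality you need. A pro-$p$ group acting on a non-zero \emph{finite} (or, by a direct-limit argument, discrete torsion) elementary abelian $p$-group indeed has non-zero fixed points; but for \emph{profinite} elementary abelian groups --- and $(D/Z)/\Phi(D/Z)$ is such a group, with no reason for $D/Z$ to be finitely generated --- this fails. Concretely, let $\Z_p$ act continuously on $M=\F_p[[\Z_p]]\cong\F_p[[t]]$, which as a topological group is $\prod_{n\in\dbN}\F_p$, with a topological generator acting as multiplication by $1+t$: since $\F_p[[t]]$ is a domain, $M$ has no non-zero fixed vectors, no invariant lines, and no non-trivial invariant procyclic subgroups. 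Topological Nakayama gives only the dual statement, non-vanishing of the \emph{coinvariants} $M_G$, which produces quotients rather than the normal abelian subgroup $E\supsetneq Z$ your contradiction requires. (There is a second defect in the same step: even when a fixed line exists in the Frattini quotient, its full preimage in $D/Z$ is not procyclic, and a procyclic lift of a fixed vector need not be $G$-stable, so one needs invariance in $D/Z$ itself, where the same counterexample applies.) Note that this is precisely the point where your argument stops using 1-smoothness: everything after Theorem~\ref{thm:Z intro} is group theory plus topology. The paper re-injects 1-smoothness exactly here: $\Ker(\theta)$ and all its subgroups are absolutely torsion-free (Remark~\ref{rem:abs torfree}), so W\"urfel's theorem \cite{wurf} forces a metabelian $\Ker(\theta)$ to be free abelian, Proposition~\ref{prop:prop} then yields the scalar action of $G$ on $\Ker(\theta)$, and the solvable case reduces to the metabelian one by a short induction on the derived series (Propositions~\ref{thm:metabelian} and~\ref{cor:solvable}). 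Your claim $C_G(Z)=Z$ is true --- it follows from the theorem --- but some such arithmetic input is needed to prove it; it is not a formal consequence of Theorem~\ref{thm:Z intro}.

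There is a second, independent error in your final step when $p=2$: it is not true that every torsion-free pro-$2$ subgroup of $\Z_2^\times$ lies in $1+4\Z_2$. The closed subgroup $\overline{\langle -5\rangle}$ is isomorphic to $\Z_2$, hence torsion-free, yet contains $-5\notin1+4\Z_2$. This is not a removable technicality: the group $G=\Z_2\rtimes\Z_2$ with generator $x$ acting on $y$ by $xyx^{-1}=y^{-5}$ is solvable and torsion-free, completes into a 1-smooth pair via $\theta(x)=-5$, $\theta(y)=1$, and yet satisfies $G'=\overline{\langle y^2\rangle}\not\subseteq G^4$, so it is not powerful and in particular not locally uniformly powerful. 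Thus this case must be excluded by hypothesis rather than derived: it is the reason the paper's Proposition~\ref{cor:solvable} assumes the \emph{pair} $\calG$ (and not merely the group $G$) to be torsion-free, i.e.\ for $p=2$ that $\Img(\theta)\subseteq1+4\Z_2$, which is also the reading needed to pass from ``$\theta$-abelian'' to the presentation \eqref{eq:pres thetabelian}.
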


This gives a complete description of solvable torsion-free pro-$p$ groups which may be completed into a 1-smooth pro-$p$ pair.
Moreover, Theorem~\ref{thmA:intro} settles the Smoothness Conjecture positively for the class of solvable pro-$p$ groups.

\begin{cor}\label{cor:intro}
 If $\calG=(G,\theta)$ is a 1-smooth pro-$p$ pair with $G$ solvable, then $G$ is a Bloch-Kato pro-$p$ group, i.e., the $\Z/p$-cohomology algebra of every closed subgroup of $G$ is quadratic. 
\end{cor}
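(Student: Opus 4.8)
The plan is to combine Theorem~\ref{thmA:intro} with an explicit computation of the mod-$p$ cohomology of the groups it produces. First I would reduce to the case where $G$ is torsion-free. For $p$ odd, the Artin--Schreier obstruction recalled in the Introduction forbids any subgroup of order $p$ inside a $1$-smooth pro-$p$ pair (such a subgroup would complete to a finite $1$-smooth pair other than $C_2$), so $G$ is automatically torsion-free. The case $p=2$, where involutions $\tau$ with $\theta(\tau)=-1$ may occur, I would treat separately, since it produces groups of Poincaré-duality (``real'') type that are checked directly to be Bloch-Kato. Having reduced to $G$ torsion-free, Theorem~\ref{thmA:intro} gives $G\isom A\rtimes\Z_p$ with $A$ a free abelian pro-$p$ group and the generator $t$ of the right-hand factor acting on $A$ by scalar multiplication by the unit $u=\theta(t)$.

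The first observation is that this class of groups is closed under passing to closed subgroups: if $H\leq A\rtimes\Z_p$, then either $H\subseteq A$, in which case $H$ is free abelian, or the projection of $H$ onto $\Z_p$ is a nontrivial (hence open) subgroup isomorphic to $\Z_p$, in which case $H=(H\cap A)\rtimes\overline{\langle s\rangle}$ with $H\cap A$ free abelian, $\overline{\langle s\rangle}\isom\Z_p$, and $s$ acting on $H\cap A$ again by a scalar, namely a $p$-power of $u$. Hence it suffices to prove that $H^\bullet(G,\Z/p)$ is quadratic for every $G$ in the class.

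The central point is the computation itself. Since $G$ is pro-$p$, the image $\theta(G)$ lies in the maximal pro-$p$ subgroup of $\GL_1(\Z_p)=\Z_p^\times$, so $u\equiv1\pmod p$ (for $p=2$ this is vacuous, as $\F_2^\times$ is trivial); in either case $t$ acts trivially on $H^\bullet(A,\Z/p)=\Lambda^\bullet H^1(A,\Z/p)$. I would then feed the extension $1\to A\to G\to\Z_p\to1$ into the Lyndon--Hochschild--Serre spectral sequence. As $\cd(\Z_p)=1$, only the columns $i=0,1$ survive, so there is no room for higher differentials and the sequence degenerates at the $E_2$-page; the associated graded of $H^\bullet(G,\Z/p)$ is therefore $\Lambda^\bullet H^1(A,\Z/p)\otimes\Lambda(\chi)$, where $\chi$ spans the inflated line $H^1(\Z_p,\Z/p)$. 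This identifies $H^1(G,\Z/p)$ with $H^1(A,\Z/p)\oplus\langle\chi\rangle$, shows that $H^\bullet(G,\Z/p)$ is generated in degree $1$, and shows that its defining relations coincide with those of an exterior algebra and hence sit in degree $2$; thus $H^\bullet(G,\Z/p)$ is quadratic.

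The main obstacle I expect is not the degeneration of the spectral sequence but the passage from it to the actual cup-product structure: one must verify that the graded isomorphism above is realized multiplicatively, i.e. that products of degree-$1$ classes genuinely fill up each $H^n$ and that no new relations appear beyond degree $2$. This uses crucially that $A$ is free abelian, so that $H^\bullet(A,\Z/p)$ is already the quadratic exterior algebra on $H^1(A,\Z/p)$, together with the triviality of the $t$-action. The case $p=2$ with $u\not\equiv1\pmod4$ (for instance $u=-1$), where $G$ is no longer uniformly powerful but only locally so, requires the extra remark that the resulting Poincaré-duality algebra is still quadratic. Granting this, every closed subgroup of $G$ has quadratic $\Z/p$-cohomology, that is, $G$ is a Bloch-Kato pro-$p$ group.
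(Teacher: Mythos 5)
Your top-level route is exactly the paper's: Theorem~\ref{thmA:intro} reduces the corollary to groups of the form $A\rtimes\Z_p$ with scalar action (locally uniformly powerful groups), closed subgroups of such groups are again of this form, and one concludes by knowing that the $\Z/p$-cohomology of these groups is quadratic. (Your reduction to the torsion-free case is correct for odd $p$, via Example~\ref{ex:1-smooth}--(e); your subgroup-closure step is also fine.) The genuine gap is in the cohomology computation, precisely at the point you flag with ``Granting this''. Degeneration of the Lyndon--Hochschild--Serre spectral sequence at $E_2$ identifies only the bigraded algebra $E_\infty\simeq\Lambda^\bullet H^1(A,\Z/p)\otimes\Lambda(\chi)$, i.e.\ the associated graded of $H^\bullet(G,\Z/p)$; it does not determine the cup product on $H^\bullet(G,\Z/p)$ itself, and in particular it cannot ``show that the defining relations coincide with those of an exterior algebra''. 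This is not a formality: the pro-$2$ Klein bottle group $\Z_2\rtimes\Z_2$ (generator $t$ acting by $-1$) sits in the same kind of extension, the action on $H^\bullet(\Z_2,\Z/2)$ is likewise trivial since $-1\equiv1\bmod 2$, the spectral sequence degenerates for the same two-column reason and has the identical $E_\infty$-page, yet its cohomology ring is $\F_2[\alpha,\beta]/(\beta^2,\,\alpha^2+\alpha\beta)$, \emph{not} the exterior algebra. So pinning down the ring structure needs an input beyond the spectral sequence; the standard one --- and in substance what the paper's citation to \S~\ref{ssec:powerful} rests on --- is Lazard's computation of $H^\bullet(U,\Z/p)$ for $U$ finitely generated uniformly powerful, followed by a direct limit over finitely generated subgroups to handle infinite rank. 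The paper simply quotes this as well known; your proposal attempts to rederive it and stops exactly where the real content lies.

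Two further points, both about $p=2$. First, the case $u\not\equiv1\pmod 4$ that worries you cannot occur, and your description of it is incorrect: if $t$ acts on $A$ by a scalar $u\equiv3\pmod4$, then a two-generated subgroup $\langle a,t\rangle$ has commutator subgroup $\overline{\langle a^{2}\rangle}\not\subseteq\langle a,t\rangle^{4}$, so it is not powerful; hence such a group $A\rtimes\Z_2$ is \emph{not} locally uniformly powerful (contrary to your parenthetical ``only locally so''), and therefore it is never an output of Theorem~\ref{thmA:intro}, whose conclusion is precisely local uniform powerfulness. For $p=2$ the theorem forces the action to lie in $1+4\Z_2$, and the uniform/Lazard argument applies. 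Second, the cases you set aside at the start --- torsion in $G$, or $G$ torsion-free but $\Img(\theta)\not\subseteq1+4\Z_2$ --- are never actually dealt with (``checked directly'' is not an argument), and these are exactly the cases to which Proposition~\ref{cor:solvable}, the proved form of Theorem~\ref{thmA:intro}, does not apply; the paper sidesteps them by restating the corollary in the body for torsion-free pairs only.
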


\begin{rem}\label{rem:IlirSlobo}\rm
 Afther the submission of this paper, I.~Snopce and S.~Tanushevski showed in \cite{ilirslobo} that Theorems~\ref{thmA:intro}--\ref{thm:Z intro} hold for a wider class of pro-$p$ groups. 
 A pro-$p$ group is said to be {\sl Frattini-injective} if distinct finitely generated closed subgroups have distinct Frattini subgroups (cf. \cite[Def.~1.1]{ilirslobo}).
 By \cite[Thm.~1.11 and Cor.~4.3]{ilirslobo}, a pro-$p$ group which may complete into a 1-smooth pro-$p$ pair is Frattini-injective.
 By \cite[Thm.~1.4]{ilirslobo} a Frattini-injective pro-$p$ group has a unique maximal normal abelian closed subgroup, and by \cite[Thm.~1.3]{ilirslobo} a Frattini-injective pro-$p$ group is solvable if, and only if, it is locally uniformly powerful.
\end{rem}

A solvable pro-$p$ group does not contain a free non-abelian closed subgroup.
For Bloch-Kato pro-$p$ groups --- and thus in particular for maximal pro-$p$ Galois groups of fields containing a root of unity of order $p$ --- Ware proved the following Tits' alternative: 
either such a pro-$p$ group contains a free non-abelian closed subgroup; or it is locally uniformly powerful (see \cite[Cor.~1]{ware} and \cite[Thm.~B]{cq:bk}).
We conjecture that the same phenomenon occurs for 1-smooth pro-$p$ groups.

\begin{conj}
Let $G$ be a torsion-free pro-$p$ group which may be endowed with a representation $\theta\colon G\to\mathrm{GL}_1(\Z_p)$ such that $\calG=(G,\theta)$ is a 1-smooth pro-$p$ pair.
Then either $G$ is locally uniformly powerful, or $G$ contains a closed non-abelian free pro-$p$ group.
\end{conj}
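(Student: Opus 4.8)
The plan is to reduce the conjecture, by means of Theorem~\ref{thmA:intro}, to a statement about solvability. Being locally uniformly powerful is a local property, so it suffices to treat finitely generated $G$: if every finitely generated closed subgroup of $G$ is uniformly powerful then $G$ is locally uniformly powerful, and a closed non-abelian free pro-$p$ subgroup of such a subgroup is one of $G$ as well. Moreover every closed subgroup of $G$, with the restricted character, is again a 1-smooth torsion-free pro-$p$ pair. Hence the whole content of the conjecture is the implication: a finitely generated 1-smooth torsion-free pro-$p$ group that contains no closed non-abelian free pro-$p$ subgroup is solvable --- for then Theorem~\ref{thmA:intro} promotes solvability to the structure $A\rtimes\Z_p$.

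One route is cohomological and conditional on the Smoothness Conjecture (Question~\ref{conj:smooth}). If a 1-smooth pro-$p$ pair forced the $\Z/p$-cohomology algebra of $G$ and of each of its closed subgroups to be quadratic, then $G$ would be a Bloch-Kato pro-$p$ group, and the conjecture would be immediate from Ware's Tits' alternative for Bloch-Kato pro-$p$ groups (see \cite[Cor.~1]{ware} and \cite[Thm.~B]{cq:bk}). Thus the conjecture is a consequence of the Smoothness Conjecture; but quadraticity of the cohomology algebra is established here only in the solvable case (Corollary~\ref{cor:intro}), and proving it unconditionally is precisely the objective of the whole 1-smooth programme, so this route cannot be the end of the story.

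The unconditional route attempts to produce the dichotomy directly from the $\theta$-center. Writing $Z=Z(\calG)$, Theorem~\ref{thm:Z intro} gives that $Z$ is the unique maximal normal abelian closed subgroup and that $\bar G:=G/Z$ is torsion-free. The natural attempt is a descent along $G\mapsto G/Z(\calG)$: if $\bar G$ again completes into a 1-smooth pair --- or, in the framework of Remark~\ref{rem:IlirSlobo}, is again Frattini-injective --- then iterating yields a normal series with abelian quotients, whose termination is exactly solvability. In the finitely generated case one may also split by rank: a finitely generated pro-$p$ group of finite rank is $p$-adic analytic and contains no non-abelian free pro-$p$ subgroup, so there the task is purely to show that $\theta$ forces solvability (the character being a severe constraint on a $p$-adic analytic group), while in the infinite-rank case one seeks a closed subgroup $H$ with $\cd(H)=1$ and $\dim_{\Z/p}H^1(H,\Z/p)\ge2$, which is then free non-abelian of rank at least $2$.

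The main obstacle is shared by both routes and is genuinely the heart of the matter. The passage to $G/Z(\calG)$ is not known to preserve either 1-smoothness or Frattini-injectivity, so the inductive series of the structural route cannot presently be controlled; and, lacking the quadratic (Bloch-Kato) structure, one has none of the Koszul-duality dictionary between the cohomology algebra and the free pro-$p$ subgroups that underlies the argument of \cite{cq:bk}. Concretely, to realize a free subgroup of rank $\ge2$ one must exhibit two $\Z/p$-independent degree-one classes whose cup product vanishes on a suitable open subgroup, and it is exactly this cohomological input that 1-smoothness, as currently understood, is not known to supply outside the solvable case.
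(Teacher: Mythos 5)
The statement you were asked to prove is a \emph{conjecture} in the paper: the author does not prove it, and only records supporting evidence (the solvable case, which is Theorem~\ref{thmA:intro}; the $p$-adic analytic case from \cite{cq:1smooth}; the right-angled Artin case from \cite{SZ:RAAGs}) together with a reduction, in the discussion around Question~\ref{conj:tits sec}. Your proposal, rightly, does not claim a proof either; it organizes the available reductions and names the obstruction, and that assessment agrees with the paper. In particular, your localization to finitely generated closed subgroups is sound, and your reformulation --- that, granted Theorem~\ref{thmA:intro}, the conjecture is equivalent to ``no closed non-abelian free pro-$p$ subgroup implies solvable'' --- is correct and is exactly how the conjecture sits relative to what the paper actually proves.

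Two points where your analysis should be sharpened. First, the conditional route is weaker than you state: Conjecture~\ref{conj:smooth} asserts only that 1-smooth pro-$p$ groups are \emph{weakly} Bloch-Kato, whereas the Tits' alternative of \cite[Thm.~B]{cq:bk} requires the full Bloch-Kato property (quadraticity of $H^\bullet(H,\Z/p)$ for every closed subgroup $H$), and for $p=2$ an extra hypothesis on the Bockstein map; so the present conjecture does not literally follow from the Smoothness Conjecture, but only from the stronger (and likewise open) statement that 1-smoothness implies Bloch-Kato. Second, the paper's own reduction is sharper and cleaner than the descent along $G\mapsto G/Z(\calG)$ that your structural route leans on: by Proposition~\ref{prop:prop} (indeed already by \cite[Prop.~5.6]{cq:1smooth}), if $\Ker(\theta)$ is abelian then $\calG$ is $\theta$-abelian, and since $\Ker(\theta)$ is absolutely torsion-free by Remark~\ref{rem:abs torfree}, the conjecture is equivalent to the purely group-theoretic question of whether a non-abelian absolutely torsion-free pro-$p$ group, in the sense of W\"urfel \cite{wurf}, must contain a closed non-abelian free pro-$p$ subgroup. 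This reduction avoids entirely the quotient $G/Z(\calG)$, whose 1-smoothness (or Frattini-injectivity) is itself open --- that is Question~\ref{ques:split} --- which is precisely the point where your proposed iteration cannot be controlled.
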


%%%%%%%%%%%%%%%%%%%%%%%%%%%%%%%%%%%%%%%%%%%%%%%%%%%5
%%%%%%%%%%%55
%%%%%%%%%%%%%%%%%%%%%%%%%%%%%%%%%%%%%%%%%%%%%%
%%%%%%%%%%%%%%%%%%%%%%%%%%%%%%%%%%%%%%%%%%%%%%%%%%%5
%%%%%%%%%%%55
%%%%%%%%%%%%%%%%%%%%%%%%%%%%%%%%%%%%%%%%%%%%%%
%%%%%%%%%%%%%%%%%%%%%%%%%%%%%%%%%%%%%%%%%%%%%%

\section{Cyclotomic pro-$p$ pairs}

Henceforth, every subgroup of a pro-$p$ group will be tacitly assumed to be closed, and the generatos of a subgroup will be intended in the topological sense.

In particular, for a pro-$p$ group $G$ and a positive integer $n$, $G^{p^n}$ will denote the closed subgroup of $G$ generated by the $p^n$-th powers of all elements of $G$.
Moreover, for two elements $g,h\in G$, we set $$h^g=g^{-1}hg,\qquad\text{and}\qquad[h,g]=h^{-1}\cdot h^g,$$
and for two subgroups $H_1,H_2$ of $G$, $[H_1,H_2]$ will denote the closed subgroup of $G$ generated by all commutators $[h,g]$
with $h\in H_1$ and $g\in H_2$.
In particular, $G'$ will denote the commutator subgroup $[G,G]$ of $G$, and the Frattini subgroup $G^p\cdot G'$ of $G$ is denoted by $\Phi(G)$.
Finally, $d(G)$ will denote the minimal number of generatord of $G$, i.e., $d(G)=\dim(G/\Phi(G))$ as a $\Z/p$-vector space.

\subsection{Kummerian pro-$p$ pairs}

Let $1+p\Z_p=\{1+p\lambda\mid \lambda\in\Z_p\}\subseteq\mathrm{GL}_1(\Z_p)$ denote the pro-$p$ Sylow subgroup of the group of units of the ring of $p$-adic integers $\Z_p$.
A pair $\calG=(G,\theta)$ consisting of a pro-$p$ group $G$ and a continuous homomorphism $$\theta\colon G\longrightarrow1+p\Z_p$$ is called a {\sl cyclotomic pro-$p$ pair}, and the morphism $\theta$ is called an {\sl orientation} of $G$ (cf. \cite[\S~3]{ido:small} and \cite{qw:cyc}).

A cyclotomic pro-$p$ pair $\calG=(G,\theta)$ is said to be {\sl torsion-free} if $\Img(\theta)$ is torsion-free: this is the case if $p$ is odd; or if $p=2$ and $\Img(\theta)\subseteq 1+4\Z_2$. 
Observe that a cyclotomic pro-$p$ pair $\calG=(G,\theta)$ may be torsion-free even if $G$ has non-trivial torsion --- e.g., if $G$ is the cyclic group of order $p$ and $\theta$ is constantly equal to 1.
Given a cyclotomic pro-$p$ pair $\calG=(G,\theta)$ one has the following constructions:
\begin{itemize}
 \item[(a)] if $H$ is a subgroup of $G$, $\Res_H(\calG)=(H,\theta\vert_H)$;
 \item[(b)] if $N$ is a normal subgroup of $G$ contained in $\Ker(\theta)$, then $\theta$ induces an orientation $\bar \theta\colon G/N\to1+p\Z_p$, and we set $\calG/N=(G/N,\bar\theta)$;
 \item[(c)] if $A$ is an abelian pro-$p$ group, we set $A\rtimes\calG=(A\rtimes G,\theta\circ\pi)$, with $a^g=a^{\theta(g)^{-1}}$ for all $a\in A$, $g\in G$, and $\pi$ the canonical projection $A\rtimes G\to G$.
\end{itemize}

Given a cyclotomic pro-$p$ pair $\calG=(G,\theta)$, the pro-$p$ group $G$ has two distinguished subgroups:
\begin{itemize}
 \item[(a)] the subgroup 
\begin{equation}\label{eq:K subgroup}
   K(\calG)=\left\langle\left. h^{-\theta(g)}\cdot h^{g^{-1}}\right|g\in G,h\in\Ker(\theta)\right\rangle
\end{equation}
introduced in \cite[\S~3]{eq:kummer};
\item[(b)] the $\theta$-center
\begin{equation}
  Z(\calG)=\left\langle h\in\Ker(\theta)\left|ghg^{-1}=h^{\theta(g)}\;\forall\:g\in G\right.\right\rangle
\end{equation}
introduced in \cite[\S~1]{cq:bk}.
\end{itemize}
Both $Z(\calG)$ and $K(\calG)$ are normal subgroups of $G$, and they are contained in $\Ker(\theta)$.
Moreover, $Z(\calG)$ is abelian, while
\[
   K(\calG)\supseteq\Ker(\theta)',\qquad\text{and} \qquad K(\calG)\subseteq\Phi(G).
  \]
Thus, the quotient $\Ker(\theta)/K(\calG)$ is abelian, and if $\calG$ is torsion-free one has an isomorphism of pro-$p$ pairs 
\begin{equation}
 \calG/K(\calG)\simeq(\Ker(\theta)/K(\calG))\rtimes (\calG/\Ker(\theta)),
\end{equation}
namely, $G/K(\calG)\simeq(\Ker(\theta)/K(\calG))\rtimes (G/\Ker(\theta))$ (where the action is induced by $\theta$, in the latter), and both pro-$p$ groups are endowed with the orientation induced by $\theta$ (cf. \cite[Prop.~3.1]{cq:1smooth}).

\begin{defin}\label{defin:kummer}\rm
Given a cyclotomic pro-$p$ pair $\calG=(G,\theta)$, let $\Z_p(1)$ denote the continuous $G$-module of rank 1 induced by $\theta$, i.e., $\Z_p(1)\simeq\Z_p$ as abelian pro-$p$ groups, and $g.\lambda=\theta(g)\cdot\lambda$ for every $\lambda\in\Z_p(1)$.
The pair $\calG$ is said to be {\sl Kummerian} if for every $n\geq1$ the map
\begin{equation}\label{eq:epimorphism}
  H^1(G,\Z_p(1)/p^n)\longrightarrow H^1(G,\Z_p(1)/p),
\end{equation}
induced by the epimorphism of $G$-modules $\Z_p(1)/p^n\to\Z_p(1)/p$, is surjective.
Moreover, $\calG$ is {\sl 1-smooth} if $\Res_H(\calG)$ is Kummerian for every subgroup $H\subseteq G$.
\end{defin}

Observe that the action of $G$ on $\Z_p(1)/p$ is trivial, as $\Img(\theta)\subseteq1+p\Z_p$.
We say that a pro-$p$ group $G$ may complete into a Kummerian, or 1-smooth, pro-$p$ pair if there exists an orientation $\theta\colon G\to1+p\Z_p$ such that the pair $(G,\theta)$ is Kummerian, or 1-smooth.

Kummerian pro-$p$ pairs and 1-smooth pro-$p$ pairs were introduced in \cite{eq:kummer} and in \cite[\S~14]{dcf:lift} respectively.
In \cite{qw:cyc}, if $\calG=(G,\theta)$ is a 1-smooth pro-$p$ pair, the orientation $\theta$ is said to be {\sl 1-cyclotomic}.
Note that in \cite[\S~14.1]{dcf:lift}, a pro-$p$ pair is defined to be 1-smooth if the maps \eqref{eq:epimorphism} are surjective for every {\sl open} subgroup of $G$, yet by a limit argument this implies also that the maps \eqref{eq:epimorphism} are surjective also for every {\sl closed} subgroup of $G$ (cf. \cite[Cor.~3.2]{qw:cyc}). 

\begin{rem}\label{rem:Zp1}\rm
 Let $\calG=(G,\theta)$ be a cyclotomic pro-$p$ pair. 
 Then $\calG$ is Kummerian if, and only if, the map 
 \[
  H^1_{\mathrm{cts}}(G,\Z_p(1))\longrightarrow H^1(G,\Z_p(1)/p),
 \]
induced by the epimorphism of continuous left $G$-modules $\Z_p(1)\twoheadrightarrow\Z_p(1)/p$, is surjective (cf. \cite[Prop.~2.1]{qw:cyc}) --- here $H_{\mathrm{cts}}^*$ denotes continuous cochain cohomology as introduced by J.~Tate in \cite{tate}.
\end{rem}

One has the following group-theoretic characterization of Kummerian torsion-free pro-$p$ pairs (cf. \cite[Thm.~5.6 and Thm.~7.1]{eq:kummer} and \cite[Thm.~1.2]{cq:chase}).

\begin{prop}\label{prop:kummer}
 A torsion-free cyclotomic pro-$p$ pair $\calG=(G,\theta)$ is Kummerian if and only if $\Ker(\theta)/K(\calG)$ is a free abelian pro-$p$ group.
\end{prop}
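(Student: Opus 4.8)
The plan is to translate the cohomological Kummerian condition into the surjectivity of a reduction map on $1$-cocycles, and then to isolate the contribution of $\Ker(\theta)/K(\calG)$ by exploiting the semidirect decomposition of $\calG/K(\calG)$ recorded above. By Remark~\ref{rem:Zp1}, $\calG$ is Kummerian if and only if the reduction map
\[
 \rho\colon H^1_{\mathrm{cts}}(G,\Z_p(1))\longrightarrow H^1(G,\Z_p(1)/p)=\Hom(G,\Z/p)
\]
is surjective, the target being ordinary homomorphisms since $\Img(\theta)\subseteq 1+p\Z_p$ acts trivially modulo $p$. First I would show that every continuous crossed homomorphism $c\colon G\to\Z_p(1)$ vanishes on $K(\calG)$. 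Writing $N=\Ker(\theta)$, the cocycle identity together with $c(g^{-1})=-\theta(g)^{-1}c(g)$ yields $c(h^{g})=\theta(g)^{-1}c(h)$ for all $h\in N$ and $g\in G$; since $c\vert_N$ is an honest homomorphism, $c(h^{-\theta(g)})=-\theta(g)c(h)$ and $c(h^{g^{-1}})=\theta(g)c(h)$, so the two terms cancel and $c$ kills each generator $h^{-\theta(g)}\cdot h^{g^{-1}}$ of $K(\calG)$ in \eqref{eq:K subgroup}. As the coboundaries factor through $G/N$ and $K(\calG)\subseteq\Phi(G)$, neither source nor target of $\rho$ changes upon replacing $G$ by $\bar G=G/K(\calG)$.

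Second, I would invoke the isomorphism $\calG/K(\calG)\simeq A\rtimes(\calG/N)$ with $A=N/K(\calG)$ abelian and $Q:=G/N$ acting by $a^{q}=a^{\theta(q)^{-1}}$. Because $Q$ splits off $\bar G$, inflation is split injective, the Hochschild--Serre transgression vanishes, and one obtains a decomposition compatible with reduction,
\[
 H^1(\bar G,\Z_p(1))\isom H^1(Q,\Z_p(1))\oplus\Hom(A,\Z_p),\qquad H^1(\bar G,\Z/p)\isom H^1(Q,\Z/p)\oplus\Hom(A,\Z/p),
\]
using that $\theta\vert_A$ is trivial, so $A$ acts trivially on $\Z_p(1)$ and $H^1(A,\Z_p(1))^Q=\Hom(A,\Z_p)$. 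Equivalently, and more elementarily, one checks at the level of cocycles that the restrictions $c\vert_A\in\Hom(A,\Z_p)$ and $c\vert_Q\in Z^1(Q,\Z_p(1))$ determine $c$ and are subject to no further constraint, the required $Q$-equivariance of $c\vert_A$ being automatic from $a^{q}=a^{\theta(q)^{-1}}$.

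Finally I would dispose of the $Q$-summand. As $\calG$ is torsion-free, $Q=\Img(\theta)$ is either trivial or $\simeq\Z_p$; in the latter case, writing $\theta(\sigma)-1=p^{v}u$ with $u\in\Z_p^{\times}$ for a generator $\sigma$ of $Q$, a direct cyclic computation gives $H^1(Q,\Z_p(1))\isom\Z/p^{v}$ mapping onto $H^1(Q,\Z/p)\isom\Z/p$ by sending generator to generator, so the $Q$-component of $\rho$ is always surjective. Hence $\calG$ is Kummerian if and only if $\Hom(A,\Z_p)\to\Hom(A,\Z/p)$ is surjective, which for an abelian pro-$p$ group $A$ holds precisely when $A$ is free abelian: torsion obstructs the lifting, while $A\simeq\prod_I\Z_p$ makes it coordinatewise surjective. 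I expect the main technical point to be the compatible splitting of the second step --- verifying that the transgression vanishes and that reduction modulo $p$ respects the decomposition --- together with the structural fact identifying surjectivity of $\Hom(A,\Z_p)\to\Hom(A,\Z/p)$ with freeness of $A$ in the category of abelian pro-$p$ groups.
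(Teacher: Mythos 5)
Your strategy is viable, and note for calibration that the paper itself does not prove Proposition~\ref{prop:kummer} at all: it only cites \cite[Thm.~5.6 and Thm.~7.1]{eq:kummer} and \cite[Thm.~1.2]{cq:chase}, so your proposal has to stand on its own. Most of it does. The first step is correct: continuous $1$-cocycles $c\colon G\to\Z_p(1)$ satisfy $c(h^g)=\theta(g)^{-1}c(h)$ for $h\in\Ker(\theta)$, hence kill the generators $h^{-\theta(g)}h^{g^{-1}}$ of $K(\calG)$, and since $K(\calG)\subseteq\Phi(G)$ both source and target of $\rho$ are unchanged on passing to $G/K(\calG)$. The splitting along $A=\Ker(\theta)/K(\calG)$ and $Q=G/\Ker(\theta)$ is also correct, but here your ``more elementary'' cocycle-level verification should be the argument rather than a parenthetical alternative: the Hochschild--Serre spectral sequence is delicate for continuous cochain cohomology with the non-finite coefficient module $\Z_p(1)$, whereas the direct check that $c\mapsto(c\vert_A,c\vert_Q)$ is a bijection on cocycles compatible with coboundaries and with reduction is unproblematic. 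The procyclic computation $H^1(Q,\Z_p(1))\simeq\Z_p/(\theta(\sigma)-1)\Z_p\to H^1(Q,\Z/p)\simeq\Z/p$, surjective, is also right.

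The genuine gap is in your final step, which is in fact the heart of the matter, since everything before it is a reduction to the abelian case. You assert that for an abelian pro-$p$ group $A$ the map $\Hom(A,\Z_p)\to\Hom(A,\Z/p)$ is surjective precisely when $A$ is free abelian, justified by ``torsion obstructs the lifting.'' Read literally, this is not an argument: a torsion element of $A$ need not be detected by any continuous homomorphism to $\Z/p$ (in $A=\Z/p^2$ every homomorphism to $\Z/p$ kills the order-$p$ elements), so you cannot just say that a homomorphism nonzero on torsion fails to lift --- you must first produce a torsion element $t\notin pA$. That requires a proof: if every torsion element lay in $pA$, then writing $t=pa$ forces the witness $a$ to be torsion as well (its order divides $p\cdot\mathrm{ord}(t)$), so iterating gives torsion $\subseteq\bigcap_n p^nA$, and this intersection is trivial in any abelian pro-$p$ group by residual finiteness; hence some torsion element survives in $A/pA$, and since $pA$ is closed there is a continuous $f\colon A\to\Z/p$ with $f(t)\neq0$, which cannot lift because homomorphisms into $\Z_p$ kill torsion. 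Even with this repaired, your argument only yields ``surjective $\Rightarrow$ torsion-free''; to reach ``free abelian,'' as the proposition states, you must also invoke the structure theorem that a torsion-free abelian pro-$p$ group is isomorphic to a product $\Z_p^I$ (Pontryagin duality plus the classification of divisible torsion abelian groups). With these two points supplied, your proof is complete and gives a self-contained cohomological route to the result.
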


\begin{rem}\label{rem:abs torfree}\rm
Let $\calG=(G,\theta)$ be a cyclotomic pro-$p$ pair with $\theta\equiv\mathbf{1}$, i.e., $\theta$ is constantly equal to 1.
Since $K(\calG)=G'$ in this case, $\calG$ is Kummerian if and only if the quotient $G/G'$ is torsion-free. 
Hence, by Proposition~\ref{prop:kummer}, $\calG$ is 1-smooth if and only if $H/H'$ is torsion-free for every subgroup $H\subseteq G$.
Pro-$p$ groups with such property are called {\sl absolutely torsion-free}, and they were introduced by T.~W\"urfel in \cite{wurf}.
In particular, if $\calG=(G,\theta)$ is a 1-smooth pro-$p$ pair (with $\theta$ non-trivial), then $\Res_{\Ker(\theta)}(\calG)=(\Ker(\theta),\mathbf{1})$ is again 1-smooth, and thus $\Ker(\theta)$ is absolutely torsion-free.
Hence, a pro-$p$ group which may complete into a 1-smooth pro-$p$ pair is an absolutely torsion-free-by-cyclic pro-$p$ group.
\end{rem}

\begin{exam}\label{ex:1-smooth}\rm  
\begin{itemize}
\item[(a)] A cyclotomic pro-$p$ pair $(G,\theta)$ with $G$ a free pro-$p$ group is 1-smooth for any orientation $\theta\colon G\to1+p\Z_p$ (cf. \cite[\S~2.2]{qw:cyc}).
\item[(b)] A cyclotomic pro-$p$ pair $(G,\theta)$ with $G$ an infinite Demushkin pro-$p$ group is 1-smooth if and only if $\theta\colon G\to1+p\Z_p$ is defined as in \cite[Thm.~4]{labute:demushkin} (cf. \cite[Thm.~7.6]{eq:kummer}).
E.g., if $G$ has a minimal presentation
\[
 G=\left\langle\:x_1,\ldots,x_d\:\mid\:x_1^{p^f}[x_1,x_2]\cdots[x_{d-1},x_d]=1\:\right\rangle
\]
with $f\geq1$ (and $f\geq2$ if $p=2$), then $\theta(x_2)=(1-p^f)^{-1}$, while $\theta(x_i)=1$ for $i\neq2$.
\item[(c)] For $p\neq2$ let $G$ be the pro-$p$ group with minimal presentation 
$$ G=\langle x,y,z\mid  [x,y]=z^p\rangle.$$
Then the pro-$p$ pair $(G,\theta)$ is not Kummerian for any orientation $\theta\colon G\to1+p\Z_p$ (cf. \cite[Thm.~8.1]{eq:kummer}).
\item[(d)] Let 
$$H=\left\{\left(\begin{array}{ccc} 1 & a & c \\ 0 & 1 & b \\ 0 & 0 &1 \end{array}\right)\mid a,b,c\in\Z_p\right\}$$
 be the Heisenberg pro-$p$ group.
The pair $(H,\mathbf{1})$ is Kummerian, as $H/H'\simeq\Z_p^2$, but $H$ is not absolutely torsion-free.
In particular, $H$ can not complete into a 1-smooth pro-$p$ pair (cf. \cite[Ex.~5.4]{cq:1smooth}).
\item[(e)] The only 1-smooth pro-$p$ pair $(G,\theta)$ with $G$ a finite $p$-group is the cyclic group of order 2 $G\simeq\Z/2$, endowed with the only non-trivial orientation $\theta\colon G\twoheadrightarrow\{\pm1\}\subseteq 1+2\Z_2$ (cf. \cite[Ex.~3.5]{eq:kummer}).
\end{itemize}
\end{exam}

\begin{rem}\label{rem:torfree}\rm
By Example~\ref{ex:1-smooth}--(e), if $\calG=(G,\theta)$ is a torsion-free 1-smooth pro-$p$ pair, then $G$ is torsion-free.
\end{rem}

A torsion-free pro-$p$ pair $\calG=(G,\theta)$ is said to be {\sl $\theta$-abelian} if the following equivalent conditions hold:
\begin{itemize}
 \item[(i)] $\Ker(\theta)$ is a free abelian pro-$p$ group, and $\calG\simeq \Ker(\theta)\rtimes(\calG/\Ker(\theta))$;
 \item[(ii)] $Z(\calG)$ is a free abelian pro-$p$ group, and $Z(\calG)=\Ker(\theta)$;
 \item[(iii)] $\calG$ is Kummerian and $K(\calG)=\{1\}$
\end{itemize}
(cf. \cite[Prop.~3.4]{cq:bk} and \cite[\S~2.3]{cq:chase}).
Explicitly, a torsion-free pro-$p$ pair $\calG=(G,\theta)$ is $\theta$-abelian if and only if $G$ has a minimal presentation
\begin{equation}\label{eq:pres thetabelian}
  G=\left\langle x_0,x_i,i\in I\mid [x_0,x_i]=x_i^{q},[x_i,x_j]=1\:\forall\:i,j\in I\right\rangle\simeq\Z_p^I\rtimes \Z_p
\end{equation}
for some set $I$ and some $p$-power $q$ (possibly $q=p^\infty=0$), and in this case $\Img(\theta)=1+q\Z_p$.
In particular, a $\theta$-abelian pro-$p$ pair is also 1-smooth, as every open subgroup $U$ of $G$ is again isomorphic to $\Z_p^I\rtimes\Z_p$, with action induced by $\theta\vert_U$, and therefore $\Res_U(\calG)$ is $\theta\vert_U$-abelian.

\begin{rem}\label{rem:fratcov}\rm
From \cite[Thm.~5.6]{eq:kummer}, one may deduce also the following group-theoretic characterization of Kummerian pro-$p$ pairs: a pro-$p$ group $G$ may complete into a Kummerian oriented pro-$p$ group if, and only if, there exists an epimorphism of pro-$p$ groups $\varphi\colon G\twoheadrightarrow\bar G$ such that $\bar G$ has a minimal presentation \eqref{eq:pres thetabelian}, and $\Ker(\varphi)$ is contained in the Frattini subgroup of $G$ (cf., e.g., \cite[Prop.~3.11]{qw:bogo}).
\end{rem}

\begin{rem}\label{rem:unique}\rm
 If $G\simeq \Z_p$, then the pair $(G,\theta)$ is $\theta$-abelian, and thus also 1-smooth, for any orientation $\theta\colon G\to1+p\Z_p$.
 
 On the other hand, if $\calG=(G,\theta)$ is a $\theta$-abelian pro-$p$ pair with $d(G)\geq2$, then $\theta$ is the only orientation which may complete $G$ into a 1-smooth pro-$p$ pair.
 Indeed, let $\calG'=(G,\theta')$ be a cyclotomic pro-$p$ pair, with $\theta'\colon G\to1+p\Z_p$ different to $\theta$, and let $\{x_0,x_i,i\in I\}$ be a minimal generating set of $G$ as in the presentation \eqref{eq:pres thetabelian} --- thus, $\theta(x_i)=1$ for all $i\in I$, and $\theta(x_0)\in1+q\Z_p$.
 Then for some $i\in I$ one has $\theta'\vert_H\not\equiv\theta\vert_H$, with $H$ the subgroup of $G$ generated by the two elements $x_0$ and $x_i$.
 In particular, one has $\theta([x_0,x_i])=\theta'([x_0,x_i])=1$.

 Suppose that $\calG'$ is 1-smooth.
 If $\theta'(x_i)\neq1$, then 
 \[
  x_i^q=x_i\cdot x_i^q\cdot x_i^{-1}=(x_i^q)^{\theta'(x_i)}=x_i^{q\theta'(x_i)},
 \]
hence $x_i^{q(1-\theta'(x_i))}=1$, a contradiction as $G$ is torsion-free by Remark~\ref{rem:torfree}.
If $\theta'(x_i)=1$ then necessarily $\theta'(x_0)\neq\theta(x_0)$, and thus 
\[
 x_i^{\theta(x_0)}=x_0\cdot x_i\cdot x_0^{-1}=x_i^{\theta'(x_0)},
\]
hence $x_i^{\theta(x_0)-\theta'(x_0)}=1$, again a contradiction as $G$ is torsion-free.
(See also \cite[Cor.~3.4]{qw:cyc}.)
\end{rem}

%%%%%%%%%%%%%%%%%%%%%%%%%%%%%55

\subsection{The Galois case}\label{ssec:Galois}

Let $\K$ be a field containing a root of 1 of order $p$, and let $\mu_{p^\infty}$ denote the group of roots of 1 of order a $p$-power contained in the separable closure of $\K$.
Then $\mu_{p^\infty}\subseteq \K(p)$, and the action of the maximal pro-$p$ Galois group $G_{\K}(p)=\Gal(\K(p)/\K)$ on $\mu_{p^\infty}$ induces a continuous homomorphism $$\theta_{\K}\colon G_{\K}(p)\longrightarrow1+p\Z_p$$ --- called the {\sl pro-$p$ cyclotomic character of $G_{\K}(p)$} ---, as the group of the automorphisms of $\mu_{p^{\infty}}$ which fix the roots of order $p$ is isomorphic to $1+p\Z_p$ (see, e.g., \cite[p.~202]{ido:book} and \cite[\S~4]{eq:kummer}).
In particular, if $\K$ contains a root of 1 of order $p^k$ for $k\geq1$, then $\Img(\theta_{\K})\subseteq 1+p^k\Z_p$.

Set $\calG_{\K}=(G_{\K}(p),\theta_{\K})$. 
Then by Kummer theory one has the following (see, e.g., \cite[Thm.~4.2]{eq:kummer}).

\begin{thm}\label{thm:galois}
 Let $\K$ be a field containing a root of 1 of order $p$.
 Then $\calG_{\K}=(G_{\K}(p),\theta_{\K})$ is 1-smooth.
\end{thm}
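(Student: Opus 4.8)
The plan is to unwind the definition of $1$-smoothness and reduce the required surjectivity to the classical Hilbert~90 via Kummer theory. First I would fix a closed subgroup $H\subseteq G_{\K}(p)$ and let $\mathbb{L}=\K(p)^H$ be its fixed field, so that $H=\Gal(\K(p)/\mathbb{L})$ and $\mathbb{L}\supseteq\K$ still contains $\mu_p$. The key preliminary step is to identify $H$ with the \emph{whole} maximal pro-$p$ Galois group $G_{\mathbb{L}}(p)$: since $\K(p)$ admits no non-trivial Galois $p$-extension (it is $p$-closed by construction), while $\mathbb{L}(p)\supseteq\K(p)$ because $\K(p)/\mathbb{L}$ is pro-$p$, the extension $\mathbb{L}(p)/\K(p)$ is itself pro-$p$ and hence trivial; thus $\mathbb{L}(p)=\K(p)$ and $H=G_{\mathbb{L}}(p)$. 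Moreover $\theta_{\K}\vert_H$ is exactly the cyclotomic character $\theta_{\mathbb{L}}$, as both are induced by the action on $\mu_{p^\infty}$. This reduces the claim to showing that $\calG_{\mathbb{L}}=(G_{\mathbb{L}}(p),\theta_{\mathbb{L}})$ is Kummerian for every field $\mathbb{L}$ containing $\mu_p$.

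Next I would compute the relevant cohomology. For each $n\geq1$ the $G_{\mathbb{L}}(p)$-module $\Z_p(1)/p^n$ is isomorphic to $\mu_{p^n}$, with the action given by $\theta_{\mathbb{L}}$. I would first pass from $G_{\mathbb{L}}(p)$ to the full absolute Galois group $G_{\mathbb{L}}$: since all $p$-power roots of unity lie in $\mathbb{L}(p)=\K(p)$ and $\Gal(\overline{\mathbb{L}}/\K(p))$ has no non-trivial pro-$p$ quotient, so that $\Hom(\Gal(\overline{\mathbb{L}}/\K(p)),\mu_{p^n})=0$, inflation-restriction yields an isomorphism $H^1(G_{\mathbb{L}}(p),\mu_{p^n})\cong H^1(G_{\mathbb{L}},\mu_{p^n})$. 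Then the Kummer exact sequence $1\to\mu_{p^n}\to\overline{\mathbb{L}}^\times\xrightarrow{p^n}\overline{\mathbb{L}}^\times\to1$ together with Hilbert~90 ($H^1(G_{\mathbb{L}},\overline{\mathbb{L}}^\times)=0$) produces a canonical isomorphism $H^1(G_{\mathbb{L}}(p),\Z_p(1)/p^n)\cong\mathbb{L}^\times/(\mathbb{L}^\times)^{p^n}$, valid whether or not $\mu_{p^n}\subseteq\mathbb{L}$.

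Finally I would check that, under these identifications, the map \eqref{eq:epimorphism} induced by the reduction $\Z_p(1)/p^n\twoheadrightarrow\Z_p(1)/p$ corresponds to the natural projection $\mathbb{L}^\times/(\mathbb{L}^\times)^{p^n}\twoheadrightarrow\mathbb{L}^\times/(\mathbb{L}^\times)^{p}$, which is manifestly surjective; this gives the Kummerian property, and letting $H$ range over all closed subgroups yields the $1$-smoothness of $\calG_{\K}$. Equivalently, via Remark~\ref{rem:Zp1} one may pass to the limit: $H^1_{\mathrm{cts}}(G_{\mathbb{L}}(p),\Z_p(1))=\varprojlim_n\mathbb{L}^\times/(\mathbb{L}^\times)^{p^n}$ surjects onto $H^1(G_{\mathbb{L}}(p),\Z_p(1)/p)=\mathbb{L}^\times/(\mathbb{L}^\times)^{p}$, since the transition maps of the inverse system are surjective. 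I expect the main obstacle to be the bookkeeping in this last compatibility: one must verify that the reduction $\mu_{p^n}\to\mu_p$ coming from $\Z_p(1)/p^n\to\Z_p(1)/p$ is carried by the Kummer isomorphisms to the obvious projection, a computation with explicit Kummer cocycles $g\mapsto g(\alpha)/\alpha$ (where $\alpha^{p^n}=a$) that hinges on choosing $\zeta_p=\zeta_{p^n}^{p^{n-1}}$ compatibly. The identification $H=G_{\mathbb{L}}(p)$ in the first step is the only other point requiring genuine care, the remainder being a direct invocation of Kummer theory.
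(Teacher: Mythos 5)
Your proposal is correct and takes essentially the same route as the paper, which establishes Theorem~\ref{thm:galois} precisely by this Kummer-theoretic argument (delegated to the citation \cite[Thm.~4.2]{eq:kummer}): identify each closed subgroup $H\subseteq G_{\K}(p)$ with $G_{\mathbb{L}}(p)$ for $\mathbb{L}=\K(p)^H$, and use Hilbert~90 together with inflation--restriction to rewrite the maps \eqref{eq:epimorphism} as the natural surjections $\mathbb{L}^\times/(\mathbb{L}^\times)^{p^n}\twoheadrightarrow\mathbb{L}^\times/(\mathbb{L}^\times)^{p}$. The only step stated too casually is that $\K(p)$ is $p$-closed ``by construction'': this is a standard fact but needs a short argument (for $a\in F\subseteq\K(p)$ with $F/\K$ finite Galois, the Galois closure of $F(\sqrt[p]{a})$ over $\K$ is again a finite Galois $p$-extension of $\K$, hence lies in $\K(p)$), and it underlies both your identification $H=G_{\mathbb{L}}(p)$ and the vanishing of $\Hom(\Gal(\overline{\mathbb{L}}/\K(p)),\mu_{p^n})$ in the inflation--restriction step.
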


1-smooth pro-$p$ pairs share the following properties with maximal pro-$p$ Galois groups of fields.

\begin{exam}\label{ex:Galois}\rm
 \begin{itemize}
  \item[(a)] The only finite $p$-group which occurs as maximal pro-$p$ Galois group for some field $\K$ 
  is the cyclic group of order 2, and this follows from the pro-$p$ version of the Artin-Schreier Theorem (cf.~\cite{becker}).
  Likewise, the only finite $p$-group which may complete into a 1-smooth pro-$p$ pair, is the cyclic group of order 2
  (endowed with the only non-trivial orientation onto $\{\pm1\}$), as it follows from Example~\ref{ex:1-smooth}--(e) and Remark~\ref{rem:torfree}.
  \item[(b)] If $x$ is an element of $G_{\K}(2)$ for some field $\K$ and $x$ has order 2, then $x$ self-centralizes (cf. \cite[Prop.~2.3]{craven}).
  Likewise, if $x$ is an element of a pro-$2$ group $G$ which may complete into a 1-smooth pro-2 pair, then $x$ self-centralizes (cf. \cite[\S~6.1]{qw:cyc}).
  \end{itemize}
\end{exam}

%%%%%%%%%%%%%%%%%%%%%%%%%%%%%%%%%%%%%%%%%%%%%%55

\subsection{Bloch-Kato and the Smoothness Conjecture}\label{ssec:BK}

A non-negatively graded algebra $A_\bullet=\bigoplus_{n\geq0}A_n$ over a field $\F$, with $A_0=\F$,
is called a {\sl quadratic algebra} if it is 1-generated --- i.e., every element is a combination of products
of elements of degree 1 ---, and its relations are generated by homogeneous relations of degree 2.
One has the following definitions (cf. \cite[Def.~14.21]{dcf:lift} and \cite[\S~1]{cq:bk}).

\begin{defin}\label{defin:BK}\rm
\rm Let $G$ be a pro-$p$ group, and let $n\geq1$.
Cohomology classes in the image of the natural cup-product
\[
 H^1(G,\Z/p)\times\ldots\times H^1(G,\Z/p)\overset{\cup}{\longrightarrow} H^n(G,\Z/p)
\]
are called {\sl symbols} (relative to $\Z/p$, wieved as trivial $G$-module).
\begin{itemize}
 \item[(i)] If for every open subgroup $U\subseteq G$ every element $\alpha\in H^n(U,\Z/p)$, for every $n\geq1$,
can be written as $$\alpha=\mathrm{cor}_{V_1,U}^n(\alpha_1)+\ldots+\mathrm{cor}^n_{V_r,U}(\alpha_r),$$ with $r\geq1$,
where $\alpha_i\in H^n(V_i,\Z/p)$ is a symbol and $$\mathrm{cor}_{V_i,U}^n\colon H^n(V_i,\Z/p)\longrightarrow H^n(U,\Z/p)$$
is the {\sl corestriction map} (cf. \cite[Ch.~I, \S~5]{nsw:cohn}), for some open subgroups $V_i\subseteq U$,
then $G$ is called a {\sl weakly Bloch-Kato pro-$p$ group}.
 \item[(ii)] If for every closed subgroup $H\subseteq G$ the $\Z/p$-cohomology algebra $$H^\bullet(H,\Z/p)=\bigoplus_{n\geq0}H^n(H,\Z/p),$$
endowed with the cup-product, is a quadratic algebra over $\Z/p$,
then $G$ is called a {\sl Bloch-Kato pro-$p$ group}.
As the name suggests, a Bloch-Kato pro-$p$ group is also weakly Bloch-Kato.
\end{itemize}
\end{defin}

By the Norm Residue Theorem, if $\K$ contains a root of unity of order $p$,
 then the maximal pro-$p$ Galois group $G_{\K}(p)$ is Bloch-Kato.
The pro-$p$ version of the ``Smoothness Conjecture'', formulated by De~Clerq and Florence, states that being 1-smooth
is a sufficient condition for a pro-$p$ group to be weakly Bloch-Kato
(cf. \cite[Conj.~14.25]{dcf:lift}).
 
\begin{conj}\label{conj:smooth}
 Let $\calG=(G,\theta)$ be a 1-smooth pro-$p$ pair.
 Then $G$ is weakly Bloch-Kato.
\end{conj}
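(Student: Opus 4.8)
The plan is to follow the inductive strategy envisaged by De~Clerq and Florence, proving by induction on the cohomological degree $n\geq 1$ the assertion $S_n$: for every 1-smooth pro-$p$ pair $\calG=(G,\theta)$ and every open subgroup $U\subseteq G$, each class in $H^n(U,\Z/p)$ is a sum of corestrictions of symbols from open subgroups of $U$. The case $n=1$ is vacuous, since every element of $H^1(U,\Z/p)$ is itself a symbol, so the whole weight of the argument falls on the inductive step, where 1-smoothness must supply the mechanism propagating generation by symbols from degree $n$ to degree $n+1$.

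First I would reformulate 1-smoothness cohomologically as in Remark~\ref{rem:Zp1}, namely as the surjectivity of $H^1_{\mathrm{cts}}(U,\Z_p(1))\to H^1(U,\Z_p(1)/p)$ for every open $U\subseteq G$. The idea is to combine this degree-1 Hilbert 90 with the Bockstein and the cup-product so as to lift a given symbol in $H^n$ to a compatible system of classes with twisted coefficients $\Z_p(m)/p^k$, where $\Z_p(m)=\Z_p(1)^{\otimes m}$; the obstruction to such a lifting lives one degree higher, and 1-smoothness is designed precisely to annihilate the relevant part of it. Concretely, one would try to prove inductively that the reduction maps $H^n(U,\Z_p(m)/p^k)\to H^n(U,\Z_p(m)/p)$ stay under control, thereby reducing $S_{n+1}$ to $S_n$ by a d\'evissage on the coefficient module.

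An independent, more structural line of attack would exploit the results already available in the solvable range. By Corollary~\ref{cor:intro} the conjecture holds whenever $G$ is solvable, since a Bloch-Kato pro-$p$ group is weakly Bloch-Kato; one could then attempt a Tits-type dichotomy, arguing that a group which is not locally uniformly powerful contains a free non-abelian pro-$p$ subgroup, and trying to recover the cohomology of $G$ from that of such large subgroups together with the abelian part governed by the $\theta$-center $Z(\calG)$ of Theorem~\ref{thm:Z intro}. This is tempting because the two extremes --- free pro-$p$ groups, of cohomological dimension 1, and $\theta$-abelian groups --- are patently Bloch-Kato, but assembling these pieces into a statement about all of $G$ is far from automatic.

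The hard part, and the reason the conjecture is open beyond the solvable case, is exactly the inductive step above. The 1-smoothness hypothesis controls cohomology only in degree 1, and only with the divisible coefficients $\Z_p(1)$; no purely group-theoretic device is known for transporting this degree-1 information into higher degrees. In the Galois setting that passage is provided by the geometric heart of the Rost--Voevodsky proof of the norm-residue isomorphism --- norm varieties, Rost's degree formula, and motivic cohomology operations --- and the very content of the Smoothness Conjecture is that this deep input can be replaced by the formal consequence of Hilbert 90 encoded in 1-smoothness. Producing such a group-theoretic substitute, i.e., a degree-raising argument using only 1-smoothness and the cup-product, is the crux of the problem.
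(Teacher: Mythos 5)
The statement you were asked to prove is not a theorem of the paper: it is the (pro-$p$ version of the) Smoothness Conjecture of De~Clercq and Florence, which the paper explicitly leaves open and only verifies for special classes of groups --- finite $p$-groups, $p$-adic analytic groups, pro-$p$ completions of right-angled Artin groups, and (the paper's own contribution, via Theorem~\ref{thmA:intro} and Corollary~\ref{cor:intro}) solvable groups. So there is no proof in the paper to compare yours against, and your proposal, as you yourself concede in its final paragraph, is not a proof either: it is a survey of strategies, each of which stops exactly at the point where the real difficulty begins. That self-awareness is to your credit, but to be clear about where the gaps sit: in your d\'evissage strategy, the assertion that the obstruction to lifting a symbol ``lives one degree higher'' and that ``1-smoothness is designed precisely to annihilate the relevant part of it'' is precisely the missing content. 1-smoothness, via Remark~\ref{rem:Zp1}, is the surjectivity of a single degree-1 map with $\Z_p(1)$-coefficients; nothing in that hypothesis, nor in any known formal manipulation of Bocksteins and cup-products, controls the reduction maps $H^n(U,\Z_p(m)/p^k)\to H^n(U,\Z_p(m)/p)$ for $n\geq 2$, and no candidate mechanism is exhibited.

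Your second, structural strategy has two independent gaps. First, the dichotomy ``not locally uniformly powerful $\Rightarrow$ contains a free non-abelian pro-$p$ subgroup'' is itself the paper's open Question~\ref{conj:tits sec} (equivalently the Conjecture stated in the Introduction), so you would be resting one open conjecture on another. Second, even granting that dichotomy, there is no assembly principle that recovers $H^\bullet(G,\Z/p)$, or the weak Bloch-Kato property of $G$, from the cohomology of a free non-abelian subgroup together with $Z(\calG)$: the weak Bloch-Kato condition requires every class of every \emph{open} subgroup to be a sum of corestrictions of symbols from open subgroups, whereas a free non-abelian subgroup of $G$ will in general have infinite index, and corestriction-based descent arguments from closed subgroups of infinite index do not exist in this generality. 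The honest conclusion --- which matches the paper's own stance --- is that the conjecture remains open, and that a proof would require a genuinely new degree-raising device substituting for the geometric input (norm varieties, motivic operations) of the Rost--Voevodsky proof in the Galois case.
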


In the case of $\calG=\calG_{\K}$ for some field $\K$ containing a root of 1 of order $p$,
using Milnor $K$-theory one may show that the weak Bloch-Kato
condition implies that $H^\bullet(G,\Z/p)$ is 1-generated (cf. \cite[Rem.~14.26]{dcf:lift}).
In view of Theorem~\ref{thm:galois}, a positive answer to the Smoothness Conjecture would provide 
a new proof of the surjectivity of the norm residue isomorphism, i.e., the ``surjectivity'' half of the Bloch-Kato conjecture
(cf. \cite[\S~1.1]{dcf:lift}).

Conjecture~\ref{conj:smooth} has been settled positively for the following classes of pro-$p$ groups.
\begin{itemize}
 \item[(a)] Finite $p$-groups: indeed, if $\calG=(G,\theta)$ is a 1-smooth pro-$p$ pair with $G$ a finite (non-trivial) $p$-group, then by Example~\ref{ex:1-smooth}--(e) $p=2$, $G$ is a cyclic group of order two and $\theta\colon G\twoheadrightarrow\{\pm1\}$, so that $\calG\simeq(\Gal(\dbC/\dbR),\theta_{\dbR})$, and $G$ is Bloch-Kato.
 \item[(b)] Analytic pro-$p$ groups: indeed if $\calG=(G,\theta)$ is a 1-smooth pro-$p$ pair with $G$ a $p$-adic analytic pro-$p$ group, then by \cite[Thm.~1.1]{cq:1smooth} $G$ is locally uniformly powerful and thus Bloch-Kato (see \S~\ref{ssec:powerful} below).
 \item[(c)] Pro-$p$ completions of right-angled Artin groups: indeed, in \cite{SZ:RAAGs} it is shown that if $\calG=(G,\theta)$ is a 1-smooth pro-$p$ pair with $G$ the pro-$p$ completion of a right-angled Artin group induced by a simplicial graph $\Gamma$, then necessarily $\theta$ is trivial and $\Gamma$ has the diagonal property --- namely, $G$ may be constructed starting from free pro-$p$ groups by iterating the following two operations: free pro-$p$ products, and direct products with $\Z_p$ ---, and thus $G$ is Bloch-Kato (cf. \cite[Thm.~1.2]{SZ:RAAGs}).
\end{itemize}

%%%%%%%%%%%%%%%%%%%%%%%%%%%%%%%%%%%%%%%%%%%%%%%%%%%%%%%55
%%%%%%%%%%%555
%%%%%%%%%%%%%%%%%%%%%%%%%%%%%%%%%%%%%%%%%%%%%%%%%%%%%%%%%5

\section{Normal abelian subgroups}\label{sec:normabelian}

\subsection{Powerful pro-$p$ groups}\label{ssec:powerful}

\begin{defin}\label{defin:pwfl}\rm
 A finitely generated pro-$p$ group $G$ is said to be {\sl powerful} if one has $G'\subseteq G^p$, and also $G'\subseteq G^4$ if $p=2$.
 A powerful pro-$p$ group which is also torsion-free and finitely generated is called a {\sl uniformly powerful} pro-$p$ group.
% A pro-$p$ group $G$ is said to be {\sl locally uniformly powerful} if every finitely generated closed subgroup of $G$ is uniform.
\end{defin}

For the properties of powerful and uniformly powerful pro-$p$ groups we refer to \cite[Ch.~4]{ddsms}.

A pro-$p$ group whose finitely generated subgroups are uniformly powerful, is said to be {\sl locally uniformly powerful}. 
As mentioned in the Introduction, a pro-$p$ group $G$ is locally uniformly powerful if, and only if, $G$ has a minimal presentation \eqref{eq:pres thetabelian} --- i.e., $G$ is locally powerful if, and only if, there exists an orientation $\theta\colon G\to1+p\Z_p$ such that $(G,\theta)$ is a torsion-free $\theta$-abelian pro-$p$ pair (cf. \cite[Thm.~A]{cq:bk} and \cite[Prop.~3.5]{cmq:fast}).

Therefore, a locally uniformly powerful pro-$p$ group $G$ comes endowed authomatically with an orientation $\theta\colon G\to1+p\Z_p$ such that $\calG=(G,\theta)$ is a 1-smooth pro-$p$ pair.
In fact, finitely generated locally uniformly powerful pro-$p$ groups are precisely those uniformly powerful pro-$p$ groups which may complete into a 1-smooth pro-$p$ pair (cf. \cite[Prop.~4.3]{cq:1smooth}).

\begin{prop}\label{prop:pwfl-1smooth}
 Let $\calG=(G,\theta)$ be a 1-smooth torsion-free pro-$p$ pair. 
 If $G$ is locally powerful, then $\calG$ is $\theta$-abelian, and thus $G$ is locally uniformly powerful.
\end{prop}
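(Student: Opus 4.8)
The plan is to verify the third of the equivalent conditions defining $\theta$-abelianity. Since $\calG$ is $1$-smooth it is in particular Kummerian, so it suffices to prove that $K(\calG)=\{1\}$, after which condition~(iii) in the definition of $\theta$-abelian pairs gives that $\calG$ is $\theta$-abelian. By the definition \eqref{eq:K subgroup} of $K(\calG)$, this amounts to showing that each generator $h^{-\theta(g)}\cdot h^{g^{-1}}$ is trivial, i.e. that $ghg^{-1}=h^{\theta(g)}$ for every $g\in G$ and every $h\in\Ker(\theta)$ --- equivalently, that $\Ker(\theta)=Z(\calG)$. I would fix such a pair $g,h$ and pass to the $2$-generated subgroup $H=\overline{\langle g,h\rangle}$, thereby reducing the whole statement to a local computation inside $H$; note that $K(\calG)$ is the closed subgroup generated by those specific elements, so showing every generator is trivial forces $K(\calG)=\{1\}$.

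Since $G$ is locally powerful and torsion-free (the latter by Remark~\ref{rem:torfree}), $H$ is a finitely generated powerful torsion-free pro-$p$ group, hence uniformly powerful and in particular $p$-adic analytic. Moreover $\Res_H(\calG)=(H,\theta\vert_H)$ is again $1$-smooth, because every closed subgroup of $H$ is a closed subgroup of $G$, and its orientation has torsion-free image. The key input is then \cite[Thm.~1.1]{cq:1smooth} (equivalently \cite[Prop.~4.3]{cq:1smooth}): a $1$-smooth $p$-adic analytic pro-$p$ group is locally uniformly powerful, so $H$ admits a minimal presentation of the form \eqref{eq:pres thetabelian}, and thus there is an orientation $\theta''$ making $(H,\theta'')$ a $\theta''$-abelian pair. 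If $d(H)\le 1$ then $H$ is procyclic, hence abelian, and the relation $ghg^{-1}=h^{\theta(g)}$ follows directly from $h\in\Ker(\theta)$ together with the torsion-freeness of $\Img(\theta)$. If $d(H)\ge 2$, then by the uniqueness of the $1$-smooth orientation on a $\theta$-abelian group (Remark~\ref{rem:unique}) one must have $\theta''=\theta\vert_H$, so that $(H,\theta\vert_H)$ is itself $\theta\vert_H$-abelian.

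It then remains to read off the desired relation from the $\theta\vert_H$-abelian structure: by condition~(ii) one has $Z(\Res_H(\calG))=\Ker(\theta\vert_H)$, and since $h\in\Ker(\theta)\cap H=\Ker(\theta\vert_H)$, this membership yields $ghg^{-1}=h^{\theta(g)}$, as wanted. As $g\in G$ and $h\in\Ker(\theta)$ were arbitrary, $K(\calG)=\{1\}$, so $\calG$ is $\theta$-abelian by condition~(iii); the final assertion that $G$ is locally uniformly powerful is then immediate from the presentation \eqref{eq:pres thetabelian}. The main obstacle is the middle step: one must ensure that the abstract orientation $\theta''$ produced by the local uniform-power structure genuinely coincides with the given $\theta\vert_H$, which is exactly what the uniqueness statement of Remark~\ref{rem:unique} secures once $d(H)\ge 2$, while the rank $\le 1$ case must be dispatched separately. (As a sanity check, an abelian $H\cong\Z_p^2$ carrying a non-trivial $\theta\vert_H$ cannot occur, since Proposition~\ref{prop:kummer} would then be violated; this is consistent with forcing $\theta\vert_H$ to be trivial on any abelian $H$.)
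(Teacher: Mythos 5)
Your overall strategy is sound, and since the paper offers no proof of this proposition at all --- it is quoted from \cite[Prop.~4.3]{cq:1smooth} --- your write-up is necessarily an independent derivation. Its skeleton is correct: reducing to $K(\calG)=\{1\}$ via condition~(iii), passing to the two-generated subgroup $H=\overline{\langle g,h\rangle}$, dispatching the procyclic case by torsion-freeness of $\Img(\theta)$, and above all using Remark~\ref{rem:unique} to force the abstract orientation $\theta''$ to coincide with $\theta\vert_H$. That last step is indeed the crux of any proof of this statement, and you identified it correctly. (One small wording issue: membership $h\in Z(\Res_H(\calG))$ alone does not literally give $ghg^{-1}=h^{\theta(g)}$, since the $\theta$-center is only \emph{generated by} elements satisfying that relation; one should instead read the relation off condition~(i), i.e.\ the semidirect-product structure with action $a\mapsto a^{\theta(g)}$. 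As the $\theta$-center is abelian this is harmless.)

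The genuine objection is to your ``key input''. Quoting \cite[Thm.~1.1]{cq:1smooth} --- which, as you note yourself, is interchangeable here with \cite[Prop.~4.3]{cq:1smooth} --- is circular: \cite[Prop.~4.3]{cq:1smooth} is precisely the source of the proposition you are proving, and in \cite{cq:1smooth} the analytic Theorem~1.1 is deduced from the uniformly powerful case, not conversely. Fortunately the appeal to it is also unnecessary, and deleting it repairs the proof. Since $G$ is locally powerful by hypothesis and torsion-free by Remark~\ref{rem:torfree}, every finitely generated subgroup of $G$ is powerful, finitely generated and torsion-free, hence uniformly powerful by Definition~\ref{defin:pwfl}; so $G$ --- and a fortiori $H$ --- is locally uniformly powerful by sheer definition, with no analyticity argument. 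The only non-trivial input needed is then the structure theorem recalled in \S~\ref{ssec:powerful} (\cite[Thm.~A]{cq:bk} and \cite[Prop.~3.5]{cmq:fast}), which produces the presentation \eqref{eq:pres thetabelian} and hence the orientation $\theta''$. Once this is noticed, you can even argue globally rather than on two-generated subgroups: some $(G,\theta'')$ is $\theta''$-abelian; if $d(G)\geq2$ then Remark~\ref{rem:unique} gives $\theta=\theta''$, and if $d(G)\leq1$ then $G$ is trivial or $G\simeq\Z_p$, where every orientation yields a $\theta$-abelian pair by Remark~\ref{rem:unique}.
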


It is well-known that the $\Z/p$-cohomology algebra of a pro-$p$ group $G$ with minimal presentation \eqref{eq:pres thetabelian} is the exterior $\Z/p$-algebra 
\[
  H^\bullet(H,\Z/p)\simeq \bigwedge_{n\geq0} H^1(H,\Z/p)
\]
--- if $p=2$ then $\bigwedge_{n\geq0} V$ is defined to be the quotient of the tensor algebra over $\Z/p$ generated by $V$ by the two-sided ideal generated by the elements $v\otimes v$, $v\in V$ ---, so that $H^\bullet(G,\Z/p)$ is quadratic.
Moreover, every subgroup $H\subseteq G$ is again locally uniformly powerful, and thus also $H^\bullet(H,\Z/p)$ is quadratic.
Hence, a locally uniformly powerful pro-$p$ group is Bloch-Kato.

%%%%%%%%%%%%%%%%%%%%%%%%%%%%%%%%%%%%%%%%%%%%%%%%%%%%%%%%%%%%%%%%%%%%%%%%%

\subsection{Normal abelian subgroups of maximal pro-$p$ Galois groups}

Let $\K$ be a field containing a root of 1 of order $p$ (and also $\sqrt{-1}$ if $p=2$).
In Galois theory one has the following result, due to A.~Engler, J.~Koenigsmann and J.~Nogueira (cf. \cite{EN} and \cite{EK}).

\begin{thm}\label{thm:EK}
 Let $\K$ be a field containing a root of 1 of order $p$ (and also $\sqrt{-1}$ if $p=2$), and suppose that the maximal pro-$p$ Galois group $G_{\K}(p)$ of $\K$ is not isomorphic to $\Z_p$.
 Then $G_{\K}(p)$ contains a unique maximal abelian normal subgroup.
 \end{thm}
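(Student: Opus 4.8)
The plan is to obtain Theorem~\ref{thm:EK} as a specialization of the purely group-theoretic Theorem~\ref{thm:Z intro} (which is established independently of Galois theory, so that no circularity arises). Concretely, I would check that the cyclotomic pair $\calG_{\K}=(G_{\K}(p),\theta_{\K})$ satisfies every hypothesis of that theorem; it then follows at once that $Z(\calG_{\K})$ is the unique maximal abelian normal closed subgroup of $G_{\K}(p)$, which also recovers the identification --- announced in the Introduction --- of this subgroup with the $\theta_{\K}$-center.

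The first step is to verify that $\calG_{\K}$ is a \emph{torsion-free} cyclotomic pro-$p$ pair, i.e.\ that $\Img(\theta_{\K})$ is torsion-free. For $p$ odd this is automatic, as $1+p\Z_p$ has no torsion. For $p=2$ I would invoke the hypothesis $\sqrt{-1}\in\K$: it says that $\K$ contains a root of $1$ of order $4=2^2$, so by the discussion in \S~\ref{ssec:Galois} one has $\Img(\theta_{\K})\subseteq1+4\Z_2$, and the latter group is torsion-free. In either case $\calG_{\K}$ is torsion-free.

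The second step is to collect the remaining hypotheses. By Theorem~\ref{thm:galois} the pair $\calG_{\K}$ is $1$-smooth; combined with torsion-freeness, Remark~\ref{rem:torfree} then yields that $G_{\K}(p)$ is itself a torsion-free pro-$p$ group. Together with the standing assumption $G_{\K}(p)\not\simeq\Z_p$, this is precisely what Theorem~\ref{thm:Z intro} requires. Applying that theorem to $G=G_{\K}(p)$ and $\theta=\theta_{\K}$ gives at once that $Z(\calG_{\K})$ is the unique maximal abelian normal closed subgroup of $G_{\K}(p)$, as desired.

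With this route the hard part will not lie in the present deduction but in Theorem~\ref{thm:Z intro} itself, which carries all the content; once that is available, Theorem~\ref{thm:EK} is a mere specialization. The only verification genuinely specific to the Galois setting is the torsion-freeness of $\calG_{\K}$, i.e.\ the inclusion $\Img(\theta_{\K})\subseteq1+4\Z_2$ for $p=2$, and this is exactly what the assumption $\sqrt{-1}\in\K$ secures. Dropping it, $\theta_{\K}$ could meet the torsion subgroup $\{\pm1\}$ of $\Z_2^\times$ and $G_{\K}(p)$ could acquire $2$-torsion (as already happens for $\K=\dbR$), so that the hypotheses of Theorem~\ref{thm:Z intro} would no longer be met.
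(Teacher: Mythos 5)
Your argument is correct, but it takes a genuinely different route from the paper --- indeed, the paper offers no proof of Theorem~\ref{thm:EK} at all: it is quoted as the classical theorem of Engler--Nogueira and Engler--Koenigsmann (cf. \cite{EN}, \cite{EK}), whose original proofs are valuation-theoretic, built on $p$-Henselian valuations. What you do is reverse the paper's logical direction: the paper uses Theorem~\ref{thm:EK} as the motivation for its group-theoretic analogue Theorem~\ref{thm:Z intro}, whereas you specialize Theorem~\ref{thm:Z intro} back to the Galois pair $\calG_{\K}$. This is legitimate and non-circular, since the proof of Theorem~\ref{thm:Z sec} rests only on Proposition~\ref{prop:prop}, Remarks~\ref{rem:torfree}, \ref{rem:abs torfree}, \ref{rem:unique} and Proposition~\ref{prop:pwfl-1smooth}, with no Galois-theoretic input. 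Your verifications are also the right ones: $1$-smoothness of $\calG_{\K}$ is Theorem~\ref{thm:galois}, and for $p=2$ the hypothesis $\sqrt{-1}\in\K$ gives $\Img(\theta_{\K})\subseteq1+4\Z_2$, hence torsion-freeness of the pair; this last point matters because the version actually proved in the body, Theorem~\ref{thm:Z sec}, assumes the \emph{pair} (not merely the group) is torsion-free, so your extra check is exactly what makes it applicable (together with the observation that a non-trivial torsion-free $G\not\simeq\Z_p$ has $d(G)\geq2$). As for what each approach buys: yours yields a self-contained, purely group-theoretic proof of the uniqueness statement, and identifies the maximal subgroup as $Z(\calG_{\K})$ directly (the paper gets this identification from \cite[Thm.~7.7]{qw:cyc}); the valuation-theoretic route of \cite{EK} yields strictly more, namely the $p$-Henselian valuation, the identification of $G_{\K}(p)/Z(\calG_{\K})$ with the maximal pro-$p$ Galois group of the residue field, and the splitting of the sequence \eqref{eq:transition} --- none of which your argument recovers, and which indeed remain open for abstract $1$-smooth pairs (Question~\ref{ques:split}).
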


By \cite[Thm.~7.7]{qw:cyc}, such a maximal abelian normal subgroup coincides with the $\theta_{\K}$-center $Z(\calG_{\K})$ of the pro-$p$ pair $\calG_{\K}=(G_{\K}(p),\theta_{\K})$ induced by the pro-$p$ cyclotomic character $\theta_{\K}$ (cf. \S~\ref{ssec:Galois}).
Moreover, the field $\K$ admits a {\sl $p$-Henselian valuation} with residue characteristic not $p$ and non-$p$-divisible value group, such that the {\sl residue field} $\kappa$ of such a valuation gives rise to the cyclotomic pro-$p$ pair $\calG_{\kappa}$ isomorphic to $\calG_{\K}/Z(\calG_{\K})$, and the induced short exact sequence of pro-$p$ groups
\begin{equation}\label{eq:transition}
   \xymatrix{ \{1\}\ar[r] & Z(\calG_{\K})\ar[r] &G_{\K}(p)\ar[r] & G_{\kappa}(p)\ar[r] &\{1\}}
\end{equation}
splits (cf. \cite[\S~1]{EK} and \cite[Ex.~22.1.6]{ido:book} --- for the definitions related to $p$-henselian valuations of fields we direct the reader to \cite[\S~15.3]{ido:book}).
In particular, $G_{\K}(p)/Z(\calG_{\K})$ is torsion-free.

\begin{rem}\label{rem:split}\rm
By \cite[Thm.~1.2 and Thm.~7.7]{qw:cyc}, Theorem~\ref{thm:EK} and the splitting of \eqref{eq:transition} generalize to 1-smooth pro-$p$ pairs whose underlying pro-$p$ group is Bloch-Kato.
Namely, if $\calG=(G,\theta)$ is a 1-smooth pro-$p$ pair with $G$ a Bloch-Kato pro-$p$ group, then $Z(\calG)$ is the unique maximal abelian normal subgroup of $G$, and it has a complement in $G$.
\end{rem}

%%%%%%%%%%%%%%%%%%%%%%%%%%%%%%%%%%%%%%%%%%%%%%%

\subsection{Proof of Theorem~\ref{thm:Z intro}}

In order to prove Theorem~\ref{thm:Z intro} (and also Theorem~\ref{thmA:intro} later on), we need the following result.

\begin{prop}\label{prop:prop}
 Let $\calG=(G,\theta)$ be a torsion-free 1-smooth pro-$p$ pair, with $d(G)=2$ and $G=\langle x,y\rangle$.
 If $[[x,y],y]=1$, then $\Ker(\theta)=\langle y\rangle$ and $$xyx^{-1}=y^{\theta(x)}.$$
\end{prop}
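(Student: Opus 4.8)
The plan is to pass first to the $\theta$-abelian quotient $\bar\calG=\calG/K(\calG)$ in order to read off the orientation on the generators, and then to upgrade that information from $\bar G$ back to $G$ using absolute torsion-freeness. Set $z=[x,y]$. The hypothesis $[[x,y],y]=1$ says exactly that $z$ commutes with $y$, so $\langle y,z\rangle$ is abelian and a one-line manipulation of $z=x^{-1}y^{-1}xy$ gives $x^{-1}yx=yz^{-1}$; moreover $z\in G'\subseteq\Ker(\theta)$, so $\theta(z)=1$. Since $\calG$ is $1$-smooth it is Kummerian, hence by Proposition~\ref{prop:kummer} (and the displayed semidirect decomposition of $\calG/K(\calG)$) the pair $\bar\calG=(\bar G,\bar\theta)$ is torsion-free and $\theta$-abelian, and $\bar G$ is still $2$-generated because $K(\calG)\subseteq\Phi(G)$. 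A torsion-free $\theta$-abelian pro-$p$ group with $d=2$ has a presentation \eqref{eq:pres thetabelian} with $|I|=1$; setting aside the degenerate case $\bar\theta\equiv\mathbf 1$ (which forces $\theta$ trivial and $G$ abelian, a situation that does not arise for the groups to which the proposition is applied), I get $\bar G\cong\Z_p\rtimes\Z_p$ with $\Ker(\bar\theta)\cong\Z_p$.

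Working inside $\bar G$, the image $\bar z=[\bar x,\bar y]$ lies in $\bar G'\subseteq\Ker(\bar\theta)$, and $\bar z\neq1$ because $\bar G$ is nonabelian. Since $\bar z$ commutes with $\bar y$, the $\theta$-abelian relation $\bar y\,\bar z\,\bar y^{-1}=\bar z^{\,\bar\theta(\bar y)}$ together with torsion-freeness of $\Ker(\bar\theta)$ forces $\bar\theta(\bar y)=1$, i.e.\ $\theta(y)=1$; then $\bar x$ must cover the $\Z_p$-quotient, so $\theta(x)\neq1$, and a Frattini argument identifies $\Ker(\bar\theta)=\langle\bar y\rangle$. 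Thus $\Ker(\theta)/K(\calG)\cong\Z_p$ is generated by the image of $y$, and $G/\Ker(\theta)\cong\Img(\theta)\cong\Z_p$.

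Next I reduce both conclusions to the single assertion that $\Ker(\theta)$ is procyclic. From $\theta(y)=1$ and $\theta(x)\neq1$ one sees that $\Ker(\theta)$ is the normal closure $\langle\langle y\rangle\rangle$, hence is topologically generated by the conjugates $y_k=x^{-k}yx^{k}$. If I can show $\Ker(\theta)\cong\Z_p$, then $y$ (a non-Frattini element) generates it, so $\Ker(\theta)=\langle y\rangle$; and then $\langle y\rangle$ is $x$-invariant, so $x^{-1}yx=y^{\lambda}$ for some $\lambda\in\Z_p^\times$, which must match the action in $\bar G$ modulo $K(\calG)$, giving $\lambda=\theta(x)^{-1}$ (the lift being exact because $\langle y\rangle\cap K(\calG)=1$), i.e.\ $xyx^{-1}=y^{\theta(x)}$. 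So it remains only to prove that $\Ker(\theta)$ is procyclic.

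This last point is the crux, and I expect it to be the main obstacle. By Remark~\ref{rem:abs torfree} the subgroup $\Ker(\theta)$ is absolutely torsion-free, and conjugating $[z,y]=1$ by powers of $x$ yields $[y_{k+1},y_k]=1$ for all $k$, while $\Ker(\theta)/K(\calG)\cong\Z_p$. The content is that absolute torsion-freeness must annihilate the non-scalar part of the $x$-action, forcing $z\in\langle y\rangle$, equivalently $\Ker(\theta)$ procyclic; note that the defining relation $[[x,y],y]=1$ lies in $\Ker(\theta)'$ and is therefore \emph{invisible} on the abelianization, so Kummerianity alone (which only controls $\Ker(\theta)/K(\calG)$) cannot suffice and the full $1$-smoothness is genuinely needed. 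The mechanism I would use is a forbidden-subgroup dichotomy: were $\Ker(\theta)$ not procyclic, the $y_k$ would exhibit an induced ``path'' configuration (consecutive conjugates commuting while distant ones do not), producing inside $\Ker(\theta)$ a subgroup that fails to be absolutely torsion-free --- of the type excluded in Example~\ref{ex:1-smooth}, such as a Heisenberg pro-$p$ group or a pro-$p$ right-angled Artin group on a path --- contradicting $1$-smoothness. Making this dichotomy precise, so that no module lies strictly between ``procyclic'' and a forbidden configuration, is the heart of the proof.
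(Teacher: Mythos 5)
Your reduction --- passing to $\calG/K(\calG)$ to read off $\theta(y)=1$, $\theta(x)\neq 1$, and then boiling both conclusions down to the single claim that $\Ker(\theta)$ is procyclic --- is essentially sound in the nondegenerate case. But the proposal stops exactly where the proof has to start: the procyclicity of $\Ker(\theta)$ is never proved, and the ``forbidden-subgroup dichotomy'' you invoke is not a theorem, nor is it likely to become one in the form you state. The commutation pattern $[y_{k+1},y_k]=1$ among the conjugates $y_k=x^{-k}yx^k$ only produces an \emph{epimorphism} from a pro-$p$ right-angled Artin group on a path onto the closed subgroup generated by the $y_k$, not an embedding of such a group into $\Ker(\theta)$; and proper quotients of non-absolutely-torsion-free groups can perfectly well be absolutely torsion-free (the Heisenberg group surjects onto $\Z_p^2$), so exhibiting the pattern yields no contradiction with $1$-smoothness. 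Worse, the configurations you name are not even all forbidden: the pro-$p$ RAAG on a path with three vertices is $F\times\Z_p$ with $F$ free of rank $2$, which is absolutely torsion-free, so ``consecutive conjugates commute while distant ones do not'' cannot by itself violate Remark~\ref{rem:abs torfree}. A second, smaller gap: you discard the case $\theta\equiv\mathbf{1}$ by asserting it ``forces $G$ abelian'', but that assertion \emph{is} the $\theta\equiv\mathbf{1}$ instance of the proposition (absolutely torsion-free groups need not be abelian --- free pro-$p$ groups are such), and the case cannot be waved away: the paper applies the proposition, in Theorem~\ref{thm:Z sec} and in Proposition~\ref{thm:metabelian}, to subgroups $\langle x,y\rangle$ on which $\theta$ may well be trivial.

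For contrast, the paper's proof avoids any global analysis of $\Ker(\theta)$ by a local argument in which $x$ itself participates. Set $z=[x,y]$, $t=y^p$, $\lambda=1-\theta(x)^{-1}$, and let $U=\langle x,z,t\rangle$, an open subgroup (the case $d(\langle y,z\rangle)=1$ is handled separately and easily). The hypothesis $[z,y]=1$ gives $[x,t]=z\cdot z^y\cdots z^{y^{p-1}}=z^p$, so
\[
 [x,t]\,t^{-\lambda}=\left(zt^{-\lambda/p}\right)^p\in K(\Res_U(\calG))
\]
by \eqref{eq:K subgroup}; since $\Res_U(\calG)$ is Kummerian, $\Ker(\theta\vert_U)/K(\Res_U(\calG))$ is free abelian by Proposition~\ref{prop:kummer}, hence torsion-free, so $zt^{-\lambda/p}$ itself lies in $K(\Res_U(\calG))\subseteq\Phi(U)$. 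Consequently $U=\langle x,t\rangle$ is $2$-generated and powerful (because $[x,t]=z^p\in U^p$), so Proposition~\ref{prop:pwfl-1smooth} makes $\Res_U(\calG)$ $\theta\vert_U$-abelian, whence $K(\Res_U(\calG))=\{1\}$ and the congruence becomes the exact identity $z=t^{\lambda/p}=y^{1-\theta(x)^{-1}}$, i.e.\ $xyx^{-1}=y^{\theta(x)}$; moreover this handles $\theta\equiv\mathbf{1}$ uniformly, with $\lambda=0$ forcing $z=1$. This is precisely the mechanism your heuristic declared unavailable: Kummerianity \emph{does} suffice, provided it is applied not to $\Ker(\theta)$ or to $G$ but to a well-chosen open subgroup containing $x$, where the relation $[[x,y],y]=1$ --- invisible on the abelianization of $\Ker(\theta)$ --- becomes visible as the statement that $zt^{-\lambda/p}$ is a $p$-th root of an element of $K$.
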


\begin{proof}
 Let $H$ be the subgroup of $G$ generated by $y$ and $[x,y]$.
 Recall that by Remark~\ref{rem:torfree}, $G$ (and hence also $H$) is torsion-free.
 
 If $d(H)=1$ then $H\simeq \Z_p$, as $H$ is torsion-free.
 Moreover, $H$ is generated by $y$ and $x^{-1}yx$, and thus $xHx^{-1}\subseteq H$.
Therefore, $x$ acts on $H\simeq\Z_p$ by multiplication by $1+p\lambda$ for some $\lambda\in \Z_p$.
If $\lambda=0$ then $G$ is abelian, and thus $G\simeq \Z_p^2$ as it is absolutely torsion-free, and $\theta\equiv\mathbf{1}$ by Remark~\ref{rem:unique}.
If $\lambda\neq0$ then $x$ acts non-trivially on the elements of $H$, and thus $\langle x\rangle\cap H=\{1\}$ and $G=H\rtimes\langle x\rangle$: by \eqref{eq:pres thetabelian}, $(G,\theta')$ is a $\theta'$-abelian pro-$p$ pair, with $\theta'\colon G\to1+p\Z_p$ defined by $\theta'(x)=1+p\lambda$ and $\theta'(y)=1$.
 By Remark~\ref{rem:unique}, one has $\theta'\equiv\theta$, and thus $\theta(x)=1+p\lambda$ and $\theta(y)=1$.
 
 If $d(H)=2$, then $H$ is abelian by hypothesis, and torsion-free, and thus $(H,\theta')$ is $\theta'$-abelian, with $\theta'\equiv{\mathbf{1}}\colon H\to1+p\Z_p$ trivial.
 By Remark~\ref{rem:unique}, one has $\theta'=\theta\vert_H$, and thus $y,[x,y]\in\Ker(\theta)$.
Now put $z=[x,y]$ and $t=y^p$, and let $U$ be the open subgroup of $G$ generated by $x,z,t$.
Clearly, $\Res_U(\calG)$ is again 1-smooth.
By hypothesis one has $z^y=z$, and hence commutator calculus yields
\begin{equation}\label{eq:1proof}
 [x,t]=[x,y^p]=z\cdot z^y\cdots z^{y^{p-1}}=z^p.
\end{equation}

Put $\lambda=1-\theta(x)^{-1}\in p\Z_p$.
Since $t\in\Ker(\theta)$, by \eqref{eq:K subgroup} $[x,t]\cdot t^{-\lambda}$ lies in $K(\Res_U(\calG))$.
Since $t$ and $z$ commute, from \eqref{eq:1proof} one deduces 
\begin{equation}\label{eq:2proof}
 [x,t]t^{-\lambda}=z^pt^{-\lambda}=z^pt^{-\frac{\lambda}{p}p}=\left(zt^{-\lambda/p}\right)^p\in K(\Res_U(\calG)).
\end{equation}
Moreover, $zt^{-\lambda/p}\in\Ker(\theta\vert_U)$.
Since $\Res_U(\calG)$ is 1-smooth, by Proposition~\ref{prop:kummer} the quotient $\Ker(\theta\vert_U)/K(\Res_U(\calG))$ is a free abelian pro-$p$ group, and therefore \eqref{eq:2proof} implies that also $zt^{-\lambda/p}$ is an element of $K(\Res_U(\calG))$.

Since $K(\Res_U(\calG))\subseteq \Phi(U)$, one has $z\equiv t^{\lambda/p}\bmod\Phi(U)$.
Then by \cite[Prop.~1.9]{ddsms} $d(U)=2$ and $U$ is generated by $x$ and $t$.
Since $[x,t]\in U^p$ by \eqref{eq:1proof}, the pro-$p$ group $U$ is powerful.
Therefore, $\Res_U(\calG)$ is $\theta\vert_U$-abelian by Proposition~\ref{prop:pwfl-1smooth}.
In particular, the subgroup $K(\Res_U(\calG))$ is trivial, and thus $$[x,y]=z=t^{\lambda/p}=y^{1-\theta(x)^{-1}},$$
and the claim follows.
\end{proof}

Proposition~\ref{prop:prop} is a generalization of \cite[Prop.~5.6]{cq:1smooth}.

\begin{thm}\label{thm:Z sec}
 Let $\calG=(G,\theta)$ be a torsion-free 1-smooth pro-$p$ pair, with $d(G)\geq2$. 
\begin{itemize}
 \item[(i)] The $\theta$-center $Z(\calG)$ is the unique maximal abelian normal subgroup of $G$.
 \item[(ii)] The quotient $G/Z(\calG)$ is a torsion-free pro-$p$ group.
\end{itemize} 
\end{thm}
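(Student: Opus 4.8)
My plan is to treat the two assertions separately, deriving both from the two-generated case recorded in Proposition~\ref{prop:prop} together with the fact (Remark~\ref{rem:abs torfree}) that $\Ker(\theta)$, and hence every subgroup of it, is absolutely torsion-free; recall also that $G$ is torsion-free by Remark~\ref{rem:torfree}. For (i), since $Z(\calG)$ is abelian and normal, it is enough to prove that every abelian normal closed subgroup $A$ of $G$ is contained in $Z(\calG)$: then $Z(\calG)$ is the maximum, hence the unique maximal, such subgroup. Fix $a\in A\smallsetminus\{1\}$. For any $g\in G$ the element $[g,a]$ lies in $A$ by normality, so $[[g,a],a]=1$ as $A$ is abelian, and thus the hypothesis of Proposition~\ref{prop:prop} holds with $x=g$, $y=a$. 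Whenever $H=\langle g,a\rangle$ satisfies $d(H)=2$, the proposition yields $\Ker(\theta\vert_H)=\langle a\rangle$ and $gag^{-1}=a^{\theta(g)}$; in particular $a\in\Ker(\theta)$. Once $a\in\Ker(\theta)$ is known, the remaining $g$ — those with $H=\langle g,a\rangle\cong\Z_p$ procyclic — are harmless: writing $H=\langle w\rangle$ and $a=w^n$ with $n\neq0$ forces $\theta(w)^n=1$, hence $\theta(w)=1$ by torsion-freeness, so $\theta(g)=1$ and $gag^{-1}=a=a^{\theta(g)}$. Thus, as soon as a single companion $g$ with $d\langle a,g\rangle=2$ is available, I obtain $hah^{-1}=a^{\theta(h)}$ for every $h\in G$ together with $a\in\Ker(\theta)$, that is, $a\in Z(\calG)$.

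The crux of (i), which I expect to be the main obstacle, is therefore to rule out the degenerate situation in which no such companion exists, i.e. $\langle a,g\rangle$ is procyclic for every $g\in G$. In that case $a$ centralizes $G$ (procyclic subgroups are abelian) and $a\in\langle g\rangle$ for every $g\notin\Phi(G)$ (such a $g$ generates the procyclic group $\langle a,g\rangle$); moreover $a\in\Phi(G)$, since otherwise any $g$ with $\bar g$ independent of $\bar a$ in $G/\Phi(G)$ — available as $d(G)\geq2$ — would be a companion. Using $d(G)\geq2$ I would first locate $g_0\in\Ker(\theta)\smallsetminus\Phi(G)$ and deduce $\theta(a)=1$ from $a\in\langle g_0\rangle$, so $a\in\Ker(\theta)$. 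Then I split into two sub-cases. If some $g'\notin\Phi(G)$ has $\theta(g')\neq1$, then $\theta\vert_{\langle g'\rangle}$ is injective, so $a\in\langle g'\rangle\cap\Ker(\theta)=\{1\}$, a contradiction. Otherwise $\theta$ is trivial on $G\smallsetminus\Phi(G)$, hence on all of $G$, so $G$ itself is absolutely torsion-free; choosing $g_1,g_2\notin\Phi(G)$ independent modulo $\Phi(G)$ and writing $a=g_1^{s_1}=g_2^{s_2}$ with $s_1,s_2\neq0$, the abelianization of $H'=\langle g_1,g_2\rangle$ is torsion-free and, being $2$-generated, is $\cong\Z_p^2$ on the basis $\bar g_1,\bar g_2$, which forces $s_1=s_2=0$ — again a contradiction. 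This excludes the degenerate case and completes (i).

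For (ii) I would argue directly that $G/Z(\calG)$ has no torsion, without appealing to (i). Suppose $g^{p^n}\in Z(\calG)$ for some $n\geq1$; I must show $g\in Z(\calG)$. From $\theta(g)^{p^n}=\theta(g^{p^n})=1$ and torsion-freeness I get $\theta(g)=1$, so $g\in\Ker(\theta)$. Fix $h\in G$ and set $w=hgh^{-1}$ and $M=\langle g,w\rangle$. Then $\theta(w)=\theta(g)=1$, so $M\subseteq\Ker(\theta)$ is absolutely torsion-free; moreover, since $g^{p^n}\in Z(\calG)$, one computes $w^{p^n}=hg^{p^n}h^{-1}=(g^{p^n})^{\theta(h)}=(g^{\theta(h)})^{p^n}$. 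Passing to the torsion-free abelianization of $M$ gives $p^n\bar w=p^n\theta(h)\bar g$, whence $\bar w=\theta(h)\bar g$; thus $d(M)\leq1$, so $M\cong\Z_p$ is abelian and the $p^n$-th power map on $M$ is injective, forcing $w=g^{\theta(h)}$. As $h$ was arbitrary, this yields $hgh^{-1}=g^{\theta(h)}$ for all $h$, which together with $g\in\Ker(\theta)$ gives $g\in Z(\calG)$. Hence $G/Z(\calG)$ is torsion-free. The only genuinely delicate point in the whole argument is the exclusion of the degenerate central case in (i); everything else is a direct application of Proposition~\ref{prop:prop}, of torsion-freeness, and of the absolute torsion-freeness of $\Ker(\theta)$.
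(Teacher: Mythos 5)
Your proof is correct, and it rests on the same two pillars as the paper's own argument: Proposition~\ref{prop:prop} applied to the two-generated subgroups $\langle g,a\rangle$ in part (i), and the absolute torsion-freeness of subgroups of $\Ker(\theta)$ (Remark~\ref{rem:abs torfree}) together with a procyclicity argument in part (ii). The differences are organizational but worth recording. In (i), the paper first proves $A\subseteq\Ker(\theta)$ as a separate preliminary step, by a case distinction ($A\simeq\Z_p$ versus $d(A)\geq 2$) that invokes the uniqueness of 1-smooth orientations (Remark~\ref{rem:unique}); you make that step unnecessary by reading $a\in\Ker(\theta)$ directly off the conclusion $\Ker(\theta\vert_H)=\langle a\rangle$ of Proposition~\ref{prop:prop}. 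More substantively, you treat explicitly the degenerate case in which $\langle a,g\rangle$ is procyclic for \emph{every} $g\in G$, whereas the paper writes ``for some $x\in G$ such that $d(H)=2$'' (resp.\ $d(H)\geq2$ in (ii)) without verifying that such an $x$ exists; your exclusion argument --- producing $g_0\in\Ker(\theta)\setminus\Phi(G)$, splitting on whether $\theta$ vanishes on all of $G\setminus\Phi(G)$, and using the rank-two torsion-free abelianization of $\langle g_1,g_2\rangle$ --- is correct and plugs this small gap. In (ii) the two arguments are the same idea in different clothing: the paper expands $[x,y^p]$ by commutator calculus and shows $S=\langle y,[x,y]\rangle$ is procyclic, while you conjugate directly, cancel $p^n$ in the torsion-free abelianization of $M=\langle g,hgh^{-1}\rangle$, and conclude by injectivity of the $p^n$-th power map on $\Z_p$; your version also handles arbitrary $p$-power order in one stroke rather than just exponent $p$. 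In short, the paper's write-up buys brevity by leaning on Remark~\ref{rem:unique} and leaving the degenerate procyclic cases implicit; yours buys completeness on exactly those edge cases.
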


\begin{proof}
Recall that $G$ is torsion-free by Remark~\ref{rem:torfree}.
 Since $Z(\calG)$ is an abelian normal subgroup of $G$ by definition, in order to prove (i) we need to show that if $A$ is an abelian normal subgroup of $G$, then $A\subseteq Z(\calG)$.

 First, we show that $A\subseteq \Ker(\theta)$.
 If $A\simeq\Z_p$, let $y$ be a generator of $A$.
 For every $x\in G$ one has $xyx^{-1}\in A$, and thus $xyx^{-1}=y^{\lambda}$, for some $\lambda\in1+p\Z_p$.
 Let $H$ be the subgroup of $G$ generated by $x$ and $y$, for some $x\in G$ such that $d(H)=2$.
 Then the pair $(H,\theta')$ is $\theta'$-abelian for some orientation $\theta'\colon H\to1+p\Z_p$ such that $y\in\Ker(\theta')$, as $H$ has a presentation as in \eqref{eq:pres thetabelian}.
 Since both $\Res_H(\calG)$ and $(H,\theta')$ are 1-smooth pro-$p$ pairs, by Remark~\ref{rem:unique} one has $\theta'=\theta\vert_H$, and thus $A\subseteq\Ker(\theta)$.
 
 If $A\not\simeq\Z_p$, then $A$ is a free abelian pro-$p$ group with $d(A)\geq2$, as $G$ is torsion-free.
 Therefore, by Remark~\ref{rem:abs torfree} the pro-$p$ pair $(A,\mathbf{1})$ is 1-smooth.
 Since also $\Res_A(\calG)$ is 1-smooth, Remark~\ref{rem:unique} implies that $\theta\vert_A=\mathbf{1}$, and hence $A\subseteq\Ker(\theta)$.
 
Now, for arbitrary elements $x\in G$ and $y\in A$, put $z=[x,y]$.
 Since $A$ is normal in $G$, one has $z\in A$, and since $A$ is abelian, one has $[z,y]=1$.
 Then Proposition~\ref{prop:prop} applied to the subgroup of $G$ generated by $\{x,y\}$ yields $xyx^{-1}=x^{\theta(x)}$, and this completes the proof of statement~(i).

In order to prove statement~(ii), suppose that $y^p\in Z(\calG)$ for some $y\in G$.
Then $y^p\in\Ker(\theta)$, and since $\Img(\theta)$ has no non-trivial torsion, also $y$ lies in $\Ker(\theta)$.
Since $G$ is torsion-free by Remark~\ref{rem:torfree}, $y^p\neq1$.
Let $H$ be the subgroup of $G$ generated by $y$ and $x$, for some $x\in G$ such that $d(H)\geq2$.
Since $xy^px^{-1}=(y^p)^{\theta(x)}$, commutator calculus yields
\begin{equation}\label{eq:proof isolated}
 y^{p(1-\theta(x)^{-1})}=[x,y^p]=[x,y]\cdot[x,y]^y\cdots[x,y]^{y^{p-1}}.
\end{equation}
Put $z=[x,y]$, and let $S$ be the subgroup of $H$ generated by $y,z$.
Clearly, $\Res_S(\calG)$ is 1-smooth, and since $y,z\in\Ker(\theta)$, one has $\theta\vert_S=\mathbf{1}$, and thus $S/S'$ is a free abelian pro-$p$ group by Remark~\ref{rem:abs torfree}.
From \eqref{eq:proof isolated} one deduces
\begin{equation}\label{eq:proof isolated 2}
 y^{p(1-\theta(x)^{-1})}\cdot z^{-p}\equiv \left(y^{1-\theta(x)^{-1}}\cdot z^{-1}\right)^p\equiv 1\mod S'.
\end{equation}
Since $S/S'$ is torsion-free, \eqref{eq:proof isolated 2} implies that $z\equiv y^{1-\theta(x)^{-1}}\bmod\Phi(S)$, so that $S$ is generated by $y$, and $S\simeq\Z_p$, as $G$ is torsion-free.
Therefore, $S'=\{1\}$, and \eqref{eq:proof isolated 2} yields $[x,y]=y^{1-\theta(x)^{-1}}$, and this completes the proof of statement~(ii).
 \end{proof}
 
 \begin{rem}\rm
Let $G$ be a pro-$p$ group isomorphic to $\Z_p$, and let $\theta\colon G\to1+p\Z_p$ be a non-trivial orientation.
Then by Example~\ref{ex:1-smooth}--(a), $\calG=(G,\theta)$ is 1-smooth.
Since $G$ is abelian and $\theta(x)\neq1$ for every $x\in G$, $x\neq1$, $Z(\calG)=\{1\}$, still every subgroup of $G$ is normal and abelian.
 \end{rem}

 In view of the splitting of \eqref{eq:transition} (and in view of Remark~\ref{rem:split}), it seems natural to ask the following question.
 
 \begin{question}\label{ques:split}
 Let $\calG=(G,\theta)$ be a torsion-free 1-smooth pro-$p$ pair, with $d(G)\geq2$. 
 Is the pro-$p$ pair $\calG/Z(\calG)=(G/Z(\calG),\bar{\theta})$ 1-smooth?
 Does the short exact sequence of pro-$p$ groups
 \[
  \xymatrix{ \{1\}\ar[r] & Z(\calG)\ar[r] &G\ar[r] &G/Z(\calG)\ar[r] &\{1\}}
 \]
split?
 \end{question}

If $\calG=(G,\theta)$ is a torsion-free pro-$p$ pair, then either $\Ker(\theta)=G$, or $\Img(\theta)\simeq\Z_p$, hence in the former case one has $G\simeq\Ker(\theta)\rtimes (G/\Ker(\theta))$, as the right-side factor is isomorphic to $\Z_p$, and thus $p$-projective (cf. \cite[Ch.~III, \S~5]{nsw:cohn}).
Since $Z(\calG)\subseteq Z(\Ker(\theta))$ (and $Z(\calG)= Z(G)$ if $\Ker(\theta)=G$), and since $\Ker(\theta)$ is absolutely torsion-free if $\calG$ is 1-smooth, Question~\ref{ques:split} is equivalent to the following question (of its own group-theoretic interest):
if $G$ is an absolutely torsion-free pro-$p$ group, does $G$ split as direct product $$G\simeq Z(G)\times (G/Z(G))\:\text{?}$$
One has the following partial answer (cf. \cite[Prop.~5]{wurf}): if $G$ is absolutely torsion-free, and $Z(G)$ is finitely generated, then $\Phi_n(G)=Z(\Phi_n(G))\times H$, for some $n\geq1 $ and some subgroup $H\subseteq \Phi_n(G)$ (here $\Phi_n(G)$ denotes the iterated Frattini series of $G$, i.e., $\Phi_1(G)=G$ and $\Phi_{n+1}(G)=\Phi(\Phi_n(G))$ for $n\geq1$).

%%%%%%%%%%%%%%%%%%%%%%%%%%%%%%%%%%%%%%%%%%%%%%%%%%%%%%%55
%%%%%%%%%%%555
%%%%%%%%%%%%%%%%%%%%%%%%%%%%%%%%%%%%%%%%%%%%%%%%%%%%%%%%%5

\section{Solvable pro-$p$ groups}\label{sec:solvable}

%%%%%%%%%%%%%%%%%%%%%%%%%%%%%%%%%%%%%%%%%%%%%%%%%%%%

\subsection{Solvable pro-$p$ groups and maximal pro-$p$ Galois groups}

Recall that a (pro-$p$) group $G$ is said to be meta-abelian if there is a short exact sequence 
\[\xymatrix{ \{1\}\ar[r] & N\ar[r] & G\ar[r] & \bar G\ar[r]& \{1\}}
 \]
such that both $N$ and $\bar G$ are abelian; or, equivalently, if the commutator subgroup $G'$ is abelian. 
Moreover, a pro-$p$ group $G$ is solvable if the derived series $(G^{(n)})_{n\geq1}$ of $G$ --- i.e., $G^{(1)}=G$ and $G^{(n+1)}=[G^{(n)},G^{(n)}]$ --- is finite, namely $G^{(N+1)}=\{1\}$ for some finite $N$.

\begin{exam}\label{ex:solv loc unif}\rm
 A non-abelian locally uniformly powerful pro-$p$ group $G$ is meta-abelian: if $\theta\colon G\to1+p\Z_p$ is the associated orientation, then $G'\subseteq \Ker(\theta)^p$, and thus $G'$ is abelian.
\end{exam}

In Galois theory one has the following result by R.~Ware (cf. \cite[Thm.~3]{ware}, see also \cite{koe:solvable} and \cite[Thm.~4.6]{cq:bk}).

\begin{thm}\label{thm:ware}
 Let $\K$ be a field containing a root of 1 of order $p$ (and also $\sqrt{-1}$ if $p=2$).
 If the maximal pro-$p$ Galois group $G_{\K}(p)$ is solvable, then $\calG_{\K}$ is $\theta_{\K}$-abelian.
\end{thm}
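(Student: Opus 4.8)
The plan is to obtain Theorem~\ref{thm:ware} as a corollary of the group-theoretic Theorem~\ref{thmA:intro}, using that the cyclotomic pair $\calG_{\K}$ is both torsion-free and 1-smooth under the stated hypotheses. Ware's original argument proceeds through field arithmetic and $p$-henselian valuation theory; by contrast, I would route the entire statement through the 1-smoothness machinery developed above, so that the only work specific to the Galois setting is checking that the hypotheses on $\K$ translate into the hypotheses of Theorem~\ref{thmA:intro}.

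First I would verify that $\calG_{\K}=(G_{\K}(p),\theta_{\K})$ is a \emph{torsion-free} cyclotomic pair. When $p$ is odd this is automatic, since $\Img(\theta_{\K})\subseteq 1+p\Z_p$ is torsion-free. When $p=2$ the assumption $\sqrt{-1}\in\K$ means $\K$ contains a root of $1$ of order $4$, whence $\Img(\theta_{\K})\subseteq 1+4\Z_2$, which is exactly the condition guaranteeing that the pair is torsion-free. Next, Theorem~\ref{thm:galois} gives that $\calG_{\K}$ is 1-smooth, and then Remark~\ref{rem:torfree} (via Example~\ref{ex:1-smooth}--(e)) forces the underlying pro-$p$ group $G_{\K}(p)$ itself to be torsion-free. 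Thus $\calG_{\K}$ is a torsion-free 1-smooth pro-$p$ pair whose underlying group is torsion-free, so if $G_{\K}(p)$ is solvable I may apply Theorem~\ref{thmA:intro} to conclude that $G_{\K}(p)$ is locally uniformly powerful, and in particular locally powerful. Finally, Proposition~\ref{prop:pwfl-1smooth}, applied to the 1-smooth torsion-free pair $\calG_{\K}$ whose group is locally powerful, yields that $\calG_{\K}$ is $\theta_{\K}$-abelian, which is the desired conclusion.

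The essential difficulty is entirely contained in Theorem~\ref{thmA:intro}, which I am allowed to assume; granting it, the present statement is a matter of assembling the right facts. The only genuine care needed here is in the torsion-freeness bookkeeping --- in particular the role of the hypothesis $\sqrt{-1}\in\K$ when $p=2$, which is precisely what is needed to place $\Img(\theta_{\K})$ inside $1+4\Z_2$ and hence to bring the cyclotomic pair within the scope of the torsion-free theory --- and in making sure that the conclusion of Theorem~\ref{thmA:intro}, a property of the bare group $G_{\K}(p)$, is upgraded to the statement that the \emph{specific} orientation $\theta_{\K}$ is $\theta$-abelian; this last upgrade is supplied by Proposition~\ref{prop:pwfl-1smooth} (and is consistent with the uniqueness of the orientation recorded in Remark~\ref{rem:unique} when $d(G_{\K}(p))\geq2$).
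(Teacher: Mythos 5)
Your derivation is correct, but it is genuinely different from the paper's treatment: the paper gives no proof of Theorem~\ref{thm:ware} at all, quoting it as Ware's result (\cite[Thm.~3]{ware}, see also \cite{koe:solvable} and \cite[Thm.~4.6]{cq:bk}), whose original argument is field-theoretic. You instead make explicit what the paper only hints at when it calls Proposition~\ref{cor:solvable} the ``1-smooth analogue'' of Theorem~\ref{thm:ware}: since Kummer theory (Theorem~\ref{thm:galois}) makes $\calG_{\K}$ 1-smooth, the group-theoretic Theorem~\ref{thmA:intro} actually subsumes Ware's theorem. Your bookkeeping is right at every point where care is needed: for $p=2$ the hypothesis $\sqrt{-1}\in\K$ gives $\Img(\theta_{\K})\subseteq 1+4\Z_2$ (and for odd $p$ the group $1+p\Z_p$ is torsion-free), so the pair $\calG_{\K}$ is torsion-free; Remark~\ref{rem:torfree} then makes $G_{\K}(p)$ itself torsion-free, so Theorem~\ref{thmA:intro} applies to the solvable group $G_{\K}(p)$; and the passage from ``$G_{\K}(p)$ is locally uniformly powerful'' (a property of the bare group) to ``$\calG_{\K}$ is $\theta_{\K}$-abelian'' (a property of the given orientation) is correctly supplied by Proposition~\ref{prop:pwfl-1smooth}, which moreover covers the degenerate cases $d(G_{\K}(p))\leq1$ uniformly, where Remark~\ref{rem:unique} alone would not suffice. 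One point of due diligence that your write-up leaves implicit: such a derivation is legitimate only because nothing in the chain used to prove Theorem~\ref{thmA:intro} --- Proposition~\ref{thm:metabelian}, Proposition~\ref{cor:solvable}, Proposition~\ref{prop:pwfl-1smooth} --- depends on Ware's theorem; these rest on W\"urfel's group-theoretic result \cite[Prop.~2]{wurf} and on \cite{cq:1smooth}, so there is no circularity. As for what each route buys: Ware's proof stands independently of the 1-smooth machinery, while yours shows that all field arithmetic beyond Kummer theory is dispensable for this statement --- which is precisely the philosophy behind the Smoothness Conjecture that the paper advocates.
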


%%%%%%%%%%%%%%%%%%%%%%%%%%%%%%%%%%%%%%%%%%%%%%55

\subsection{Proof of Theorem~\ref{thmA:intro} and Corollary~\ref{cor:intro}}

In order to prove Theorem~\ref{thmA:intro}, we prove first the following intermediate results
--- a consequence of W\"urfel's result \cite[Prop.~2]{wurf} and of \cite[Prop.~6.11]{cq:1smooth} ---, which may be seen as the ``1-smooth analogue'' of \cite[Thm.~2]{ware}.

\begin{prop}\label{thm:metabelian}
Let $\calG=(G,\theta)$ be a torsion-free 1-smooth pro-$p$ pair.
If $G$ is meta-abelian, then $\calG$ is $\theta$-abelian.
\end{prop}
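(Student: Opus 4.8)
The goal is to verify condition~(i) in the definition of $\theta$-abelian pair, namely that $\Ker(\theta)$ is a free abelian pro-$p$ group and that $\calG\simeq\Ker(\theta)\rtimes(\calG/\Ker(\theta))$ as pro-$p$ pairs --- the latter encoding that $G$ acts on $\Ker(\theta)$ by the scalar $\theta$. The plan is therefore to establish these two facts in turn, drawing the abelianness of $\Ker(\theta)$ from absolute torsion-freeness and the scalar action from Proposition~\ref{prop:prop}.

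For the first fact, observe that since $\calG$ is 1-smooth the restriction $\Res_{\Ker(\theta)}(\calG)=(\Ker(\theta),\mathbf1)$ is 1-smooth as well, so $\Ker(\theta)$ is absolutely torsion-free by Remark~\ref{rem:abs torfree}; moreover $\Ker(\theta)$, being a subgroup of the meta-abelian group $G$, is itself meta-abelian. By W\"urfel's result \cite[Prop.~2]{wurf} a meta-abelian absolutely torsion-free pro-$p$ group is abelian, so $\Ker(\theta)$ is abelian. Finally, as $\Res_{\Ker(\theta)}(\calG)$ is Kummerian, Proposition~\ref{prop:kummer} gives that $\Ker(\theta)=\Ker(\theta)/\Ker(\theta)'$ is free abelian (here $K(\Res_{\Ker(\theta)}(\calG))=\Ker(\theta)'=\{1\}$ by Remark~\ref{rem:abs torfree}).

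For the second fact I would distinguish two cases according to $\theta$. If $\theta\equiv\mathbf1$ then $G=\Ker(\theta)$ is free abelian, and $\calG$ is $\theta$-abelian (with $q=p^\infty$). If $\theta$ is non-trivial then $\Img(\theta)\simeq\Z_p$, so $G/\Ker(\theta)\simeq\Z_p$ is $p$-projective and the extension $\{1\}\to\Ker(\theta)\to G\to G/\Ker(\theta)\to\{1\}$ splits, say $G=\Ker(\theta)\rtimes\langle x_0\rangle$ with $\langle x_0\rangle\simeq\Z_p$. To compute the action, fix $y\in\Ker(\theta)$ with $y\neq1$ and put $H=\langle x_0,y\rangle$: since $\theta(x_0)\neq1=\theta(y)$ and $G$ is torsion-free, $x_0$ and $y$ cannot generate a procyclic group, whence $d(H)=2$; moreover $[x_0,y]\in\Ker(\theta)$ and $[[x_0,y],y]=1$ because $\Ker(\theta)$ is abelian. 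Proposition~\ref{prop:prop} applied to $H$ then yields $x_0yx_0^{-1}=y^{\theta(x_0)}$. Writing a general $g\in G$ as $g=a\,x_0^{\,n}$ with $a\in\Ker(\theta)$, and using that $\Ker(\theta)$ is abelian, one obtains $gyg^{-1}=y^{\theta(x_0)^{n}}=y^{\theta(g)}$ for every $y\in\Ker(\theta)$. Thus $\calG\simeq\Ker(\theta)\rtimes(\calG/\Ker(\theta))$, and $\calG$ is $\theta$-abelian by condition~(i).

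The main obstacle is the passage from ``meta-abelian'' to ``abelian'' for $\Ker(\theta)$: this is precisely where absolute torsion-freeness must be used decisively, since meta-abelian pro-$p$ groups that are merely Kummerian need not be abelian --- the Heisenberg pro-$p$ group of Example~\ref{ex:1-smooth}--(d) being the prototypical obstruction. It is this step that rests on W\"urfel's \cite[Prop.~2]{wurf} (and, in the finitely generated/analytic regime, on \cite[Prop.~6.11]{cq:1smooth}); once $\Ker(\theta)$ is known to be abelian, the remaining arguments are routine applications of Propositions~\ref{prop:kummer} and~\ref{prop:prop}.
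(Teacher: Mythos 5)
Your proof is correct and follows essentially the same route as the paper's: both hinge on W\"urfel's result \cite[Prop.~2]{wurf}, applied to the absolutely torsion-free meta-abelian group $\Ker(\theta)$, to get that $\Ker(\theta)$ is (free) abelian, and then on Proposition~\ref{prop:prop} to obtain the scalar action $xyx^{-1}=y^{\theta(x)}$. The differences are only organizational: the paper applies Proposition~\ref{prop:prop} to arbitrary pairs $x\in G$, $y\in\Ker(\theta)$ and concludes $\Ker(\theta)=Z(\calG)$ (condition~(ii) of $\theta$-abelianness), whereas you split off a complement $\langle x_0\rangle\simeq\Z_p$ via $p$-projectivity and propagate the action from $x_0$ to all of $G$ (condition~(i)), in the process checking the hypothesis $d(H)=2$ of Proposition~\ref{prop:prop} explicitly --- a point the paper leaves implicit.
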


\begin{proof}
Assume first that $\theta\equiv\mathbf{1}$ --- i.e., $G$ is absolutely torsion-free (cf. Remark~\ref{rem:abs torfree}).
Then $G$ is a free abelian pro-$p$ group by \cite[Prop.~2]{wurf}.

Assume now that $\theta\not\equiv\mathbf{1}$.
Since $\calG$ is 1-smooth, also $\Res_{\Ker(\theta)}(\calG)$ and $\Res_{\Ker(\theta)'}(\calG)$ are 1-smooth pro-$p$ pairs, and thus $\Ker(\theta)$ and $\Ker(\theta)'$ are absolutely torsion-free.
Moreover, $\Ker(\theta)'\subseteq G'$, and since the latter is abelian, also $\Ker(\theta)'$ is abelian, i.e., $\Ker(\theta)$ is meta-abelian.
Thus $\Ker(\theta)$ is a free abelian pro-$p$ group by \cite[Prop.~2]{wurf}.
Consequently, for arbitrary $y\in \Ker(\theta)$ and $x\in G$, the commutator $[x,y]$ lies in $\Ker(\theta)$ and $[[x,y],y]=1$.
Therefore, Proposition~\ref{prop:prop} implies that $xyx^{-1}=y^{\theta(y)}$ for every $x\in G$ and $y\in \Ker(\theta)$, 
namely, $\calG$ is $\theta$-abelian.
\end{proof} \medskip

Note that Proposition~\ref{thm:metabelian} generalizes \cite[Prop.~2]{wurf} from absolutely torsion-free pro-$p$ groups to 1-smooth pro-$p$ groups.
From Proposition~\ref{thm:metabelian}, we may deduce Theorem~\ref{thmA:intro}.

\begin{prop}\label{cor:solvable}
 Let $\calG=(G,\theta)$ be a torsion-free 1-smooth pro-$p$ pair.
If $G$ is solvable, then $G$ is locally uniformly powerful.
\end{prop}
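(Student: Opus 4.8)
The plan is to reduce everything to the meta-abelian case already settled in Proposition~\ref{thm:metabelian}. Recall from \S~\ref{ssec:powerful} that $G$ is locally uniformly powerful as soon as $\calG$ is $\theta$-abelian, and that by Proposition~\ref{thm:metabelian} the pair $\calG$ is $\theta$-abelian as soon as $G$ is meta-abelian. Since $\Img(\theta)\subseteq 1+p\Z_p$ is abelian one has $G'\subseteq\Ker(\theta)$, so $G$ is meta-abelian precisely when $\Ker(\theta)$ is abelian. Thus the whole statement comes down to proving that $\Ker(\theta)$ is abelian. Here $\Ker(\theta)$ is solvable, being a subgroup of the solvable group $G$, and it is absolutely torsion-free: indeed $\Res_{\Ker(\theta)}(\calG)=(\Ker(\theta),\mathbf{1})$ is again $1$-smooth, so Remark~\ref{rem:abs torfree} applies.

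The core of the argument is therefore the following claim: \emph{every absolutely torsion-free solvable pro-$p$ group $A$ is free abelian.} I would prove this by induction on the derived length of $A$. If $A$ is meta-abelian, then $(A,\mathbf{1})$ is a torsion-free $1$-smooth pair with trivial orientation, and Proposition~\ref{thm:metabelian} forces it to be $\theta$-abelian; with $\theta\equiv\mathbf{1}$ this means $\Ker(\mathbf{1})=A$ is a free abelian pro-$p$ group, which settles the base case (this is exactly W\"urfel's \cite[Prop.~2]{wurf}). For the inductive step, suppose $A$ has derived length at least $3$. Its commutator subgroup $A'=[A,A]$ is a closed subgroup, hence again absolutely torsion-free (absolute torsion-freeness is inherited by all closed subgroups by definition) and solvable, of strictly smaller derived length; by the inductive hypothesis $A'$ is free abelian, so $A$ is meta-abelian, and the base case then forces $A$ to be free abelian --- contradicting that its derived length is at least $3$. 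Hence no absolutely torsion-free solvable pro-$p$ group has derived length exceeding $1$, proving the claim.

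Applying the claim to $A=\Ker(\theta)$ shows that $\Ker(\theta)$ is abelian, whence $G$ is meta-abelian and the reduction in the first paragraph finishes the proof. The step I expect to be the main obstacle --- and the reason one cannot simply induct on the derived length of $G$ itself --- is that a $\theta$-abelian pair need not have abelian underlying group: by Example~\ref{ex:solv loc unif} it is only meta-abelian. Consequently, feeding $\calG$ directly into an induction on derived length would, via $(G',\theta\vert_{G'})$, only bound the derived length of $G$ rather than collapse it. The passage to the \emph{trivial-orientation} subgroup $\Ker(\theta)$ is what makes the inductive collapse work, precisely because for $\theta\equiv\mathbf{1}$ being $\theta$-abelian is the same as being free abelian. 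The remaining points to check are routine: that the restriction of a $1$-smooth pair to a closed subgroup is again $1$-smooth, and that $A'$ stays absolutely torsion-free, both of which are immediate from the definitions.
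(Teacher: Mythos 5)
Your proposal is correct and takes essentially the same approach as the paper's proof: both reduce to Proposition~\ref{thm:metabelian}, using the fact that subgroups deep in the derived series carry the trivial orientation (so W\"urfel's theorem makes them free abelian) to rule out derived length at least $3$, and then apply Proposition~\ref{thm:metabelian} once more in the resulting meta-abelian situation. The differences are only in packaging---the paper applies the metabelian case directly to the penultimate term $G^{(N-1)}$ of the derived series of $G$ (noting $\theta\vert_{G^{(n)}}\equiv\mathbf{1}$ for $n\geq2$) rather than inducting inside $\Ker(\theta)$---and your ``precisely when'' in the first paragraph is a harmless overstatement, since only the implication you actually use ($\Ker(\theta)$ abelian $\Rightarrow$ $G$ meta-abelian) is available at that point.
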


\begin{proof}
Let $N$ be the positive integer such that $G^{(N)}\neq\{1\}$ and $G^{(N+1)}=\{1\}$.
Then for every $1\leq n\leq N$, the pro-$p$ pair $\Res_{G^{n}}(\calG)$ is 1-smooth, and $G^{(n)}$ is solvable, and moreover
$\theta\vert_{G^{(n)}}\equiv\mathbf{1}$ if $n\geq2$.

Suppose that $N\geq3$. 
Since $G^{(N-1)}$ is metabelian and $\theta\vert_{G^{(N-1)}}\equiv\mathbf{1}$, Proposition~\ref{thm:metabelian} implies that $G^{(N-1)}$ is a free abelian pro-$p$ group, and therefore $G^{(N)}=\{1\}$, a contradiction.
Thus, $N\leq2$, and $G$ is meta-abelian.
Therefore, Proposition~\ref{thm:metabelian} implies that the pro-$p$ pair $\calG$ is $\theta$-abelian, and hence $G$ is locally uniformly powerful (cf. \S~\ref{ssec:powerful}).
\end{proof} 

Proposition~\ref{cor:solvable} may be seen as the 1-smooth analogue of Ware's Theorem~\ref{thm:ware}.
Corollary~\ref{cor:intro} follows from Proposition~\ref{cor:solvable} and from the fact that a locally uniformly powerful pro-$p$ group is Bloch-Kato (cf. \S~\ref{ssec:powerful}).

\begin{cor}
 Let $\calG=(G,\theta)$ be a torsion-free 1-smooth pro-$p$ pair.
If $G$ is solvable, then $G$ is Bloch-Kato.
\end{cor}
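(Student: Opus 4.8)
The plan is to deduce the statement directly from Proposition~\ref{cor:solvable} together with the cohomological description of locally uniformly powerful pro-$p$ groups recorded in \S~\ref{ssec:powerful}, so that no genuinely new computation is required. First I would invoke Proposition~\ref{cor:solvable}: since $\calG=(G,\theta)$ is a torsion-free 1-smooth pro-$p$ pair with $G$ solvable, the underlying group $G$ is locally uniformly powerful. This reduces the claim to the purely group-theoretic assertion that every locally uniformly powerful pro-$p$ group is Bloch-Kato, i.e., that for each closed subgroup $H\subseteq G$ the graded algebra $H^\bullet(H,\Z/p)$ is quadratic.

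Next I would observe that the class of locally uniformly powerful pro-$p$ groups is closed under passage to closed subgroups: a finitely generated closed subgroup of $H$ is in particular a finitely generated closed subgroup of $G$, hence uniformly powerful by hypothesis, so $H$ is again locally uniformly powerful. This is the bookkeeping that makes the Bloch-Kato condition—which quantifies over \emph{all} closed subgroups—tractable, since it suffices to establish quadraticity of $H^\bullet(\,\cdot\,,\Z/p)$ for a single locally uniformly powerful group and then reapply the argument verbatim to each closed subgroup.

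Finally, I would use the minimal presentation \eqref{eq:pres thetabelian} available for any locally uniformly powerful group, which identifies $H^\bullet(G,\Z/p)$ with the exterior $\Z/p$-algebra on $H^1(G,\Z/p)$. An exterior algebra is generated in degree~$1$ and has all its relations concentrated in degree~$2$, hence is quadratic; combined with the previous paragraph this gives quadraticity for every closed subgroup, so $G$ is Bloch-Kato. I do not expect a real obstacle here: the statement is essentially a corollary packaging the solvable-to-locally-uniformly-powerful implication of Proposition~\ref{cor:solvable} with the standard exterior-algebra computation of \S~\ref{ssec:powerful}. The only point warranting attention—rather than difficulty—is confirming that the exterior-algebra description of the cohomology passes uniformly to closed subgroups, and this is immediate from the closure property above.
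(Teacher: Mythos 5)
Your proposal is correct and follows exactly the paper's route: the paper likewise deduces the corollary by combining Proposition~\ref{cor:solvable} (solvable $+$ 1-smooth torsion-free $\Rightarrow$ locally uniformly powerful) with the fact recorded in \S~\ref{ssec:powerful} that locally uniformly powerful pro-$p$ groups are Bloch--Kato, via the exterior-algebra description of the cohomology of groups with presentation \eqref{eq:pres thetabelian} and closure of this class under passage to closed subgroups. Your explicit verification of that closure property is a minor elaboration of a point the paper states without proof, not a different argument.
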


This settles the Smoothness Conjecture for the class of solvable pro-$p$ groups.

%%%%%%%%%%%%%%%%%%%%%%%%%%%%%%%%%%%%%%%%%%%%%%%%%%%%%%%%%%5

\subsection{A Tits' alternative for 1-smooth pro-$p$ groups}

For maximal pro-$p$ Galois groups of fields one has the following Tits' alternative (cf. \cite[Cor.~1]{ware}).

\begin{thm}\label{thm:titsalt}
  Let $\K$ be a field containing a root of 1 of order $p$ (and also $\sqrt{-1}$ if $p=2$).
Then either $\calG_{\K}$ is $\theta_{\K}$-abelian, or $G_{\K}(p)$ contains a closed non-abelian free pro-$p$ group.
\end{thm}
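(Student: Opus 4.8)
The plan is to exploit the two structural features that $G_{\K}(p)$ enjoys under the stated hypotheses. On the one hand, by the Norm Residue Theorem (cf. \S~\ref{ssec:BK}) the group $G_{\K}(p)$ is a Bloch-Kato pro-$p$ group. On the other hand, by Theorem~\ref{thm:galois} the pair $\calG_{\K}=(G_{\K}(p),\theta_{\K})$ is 1-smooth, and since $\K$ contains a root of $1$ of order $p$ (and $\sqrt{-1}$ if $p=2$) the group $G_{\K}(p)$ is torsion-free; hence $\calG_{\K}$ is a torsion-free 1-smooth pro-$p$ pair to which Proposition~\ref{prop:pwfl-1smooth} may be applied.

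First I would invoke the Tits' alternative for Bloch-Kato pro-$p$ groups recalled in the Introduction (cf. \cite[Cor.~1]{ware} and \cite[Thm.~B]{cq:bk}): as $G_{\K}(p)$ is Bloch-Kato, either it contains a closed non-abelian free pro-$p$ subgroup, or it is locally uniformly powerful. In the first case there is nothing left to prove. In the second case $G_{\K}(p)$ is a torsion-free, locally powerful pro-$p$ group completing into the 1-smooth pair $\calG_{\K}$, so that Proposition~\ref{prop:pwfl-1smooth} forces $\calG_{\K}$ to be $\theta_{\K}$-abelian. This would yield the desired dichotomy.

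A more self-contained route would sidestep the Bloch-Kato Tits' alternative and argue through solvability: assuming that $G_{\K}(p)$ has no closed non-abelian free pro-$p$ subgroup, one shows that $G_{\K}(p)$ is solvable, and then Ware's Theorem~\ref{thm:ware}---equivalently, Proposition~\ref{cor:solvable} applied to $\calG_{\K}$---immediately gives that $\calG_{\K}$ is $\theta_{\K}$-abelian. Along this route the main obstacle is exactly the implication ``no non-abelian free subgroup $\Rightarrow$ solvable'': one has to analyse the $2$-generated closed subgroups $H=\langle x,y\rangle$, exclude that $H$ is free of rank $2$, and read off from the quadratic structure of $H^\bullet(H,\Z/p)$ a commutator relation that propagates to all of $G_{\K}(p)$. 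This analysis of finitely generated subgroups is the genuinely hard part, and it is precisely the content imported from \cite{ware} (respectively \cite{cq:bk}); by contrast, the passage from ``locally uniformly powerful'' to ``$\theta_{\K}$-abelian'' is the soft, purely cyclotomic step furnished by Proposition~\ref{prop:pwfl-1smooth}.
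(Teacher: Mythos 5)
First, a point of comparison: the paper does not prove Theorem~\ref{thm:titsalt} at all --- it is quoted as Ware's result (cf. \cite[Cor.~1]{ware}) and serves only to motivate Question~\ref{conj:tits sec}. So the issue is whether your derivation from the surrounding material stands on its own. The second half of your first route is correct and is exactly the needed glue: once $G_{\K}(p)$ is known to be locally uniformly powerful, Proposition~\ref{prop:pwfl-1smooth}, applied to the $1$-smooth pair $\calG_{\K}$ of Theorem~\ref{thm:galois}, pins the orientation down to the cyclotomic one and yields that $\calG_{\K}$ is $\theta_{\K}$-abelian. (One small slip there: the inference ``$G_{\K}(p)$ torsion-free, hence $\calG_{\K}$ is a torsion-free pair'' runs backwards. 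Torsion-freeness of the pair means $\Img(\theta_{\K})$ is torsion-free, which follows from $\Img(\theta_{\K})\subseteq 1+4\Z_2$ when $\sqrt{-1}\in\K$, cf. \S~\ref{ssec:Galois}; torsion-freeness of the group is then a consequence, via Remark~\ref{rem:torfree}, not a premise.)

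The genuine gap is in the first half of that route, at $p=2$. The Tits' alternative for Bloch-Kato pro-$p$ groups, \cite[Thm.~B]{cq:bk}, is a theorem for $p$ odd; as the paper itself stresses immediately after Theorem~\ref{thm:titsalt}, for $p=2$ it requires the extra hypothesis that the Bockstein morphism $\beta\colon H^1(G,\Z/2)\to H^2(G,\Z/2)$ is trivial (cf. \cite[Thm.~4.11]{cq:bk}). Hence for $p=2$ your appeal to the Bloch-Kato alternative is unjustified as written: you must add the verification that $\sqrt{-1}\in\K$ forces the Bockstein of $G_{\K}(2)$ to vanish. This is true --- under the Kummer identification $H^1(G_{\K}(2),\Z/2)\simeq\K^\times/(\K^\times)^2$ one has $\beta(\chi_a)=\chi_a\cup\chi_a=\chi_a\cup\chi_{-1}$, which is zero as soon as $-1$ is a square --- but it is a field-theoretic input your write-up omits; the only way around it is to quote \cite[Cor.~1]{ware} itself, which is not a proof but the very citation the paper makes. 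Your second route, as you yourself concede, is not a proof: the implication ``no closed non-abelian free pro-$p$ subgroup $\Rightarrow$ solvable (or locally powerful)'' is the entire content of Ware's theorem, so deferring it to \cite{ware} or \cite{cq:bk} leaves Theorem~\ref{thm:ware} and Proposition~\ref{cor:solvable} with nothing left to do.
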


Actually, the above Tits' alternative holds also for the class of Bloch-Kato pro-$p$ groups, with $p$ odd: if a Bloch-Kato pro-$p$ group $G$ does not contain any free non-abelian subgroups, then it can complete into a $\theta$-abelian pro-$p$ pair $\calG=(G,\theta)$ (cf. \cite[Thm.~B]{cq:bk}, this Tits' alternative holds also for $p=2$ under the further assumption that the Bockstein morphism $\beta\colon H^1(G,\Z/2)\to H^2(G,\Z/2)$ is trivial, see \cite[Thm.~4.11]{cq:bk}).

Clearly, a solvable pro-$p$ group contains no free non-abelian subgroups.

A pro-$p$ group is {\sl $p$-adic analytic} if it is a $p$-adic analytic manifold and
the map $(x,y)\mapsto x^{-1} y$ is analytic, or, equivalently, if it contains an open uniformly powerful subgroup (cf. \cite[Thm.~8.32]{ddsms}) --- e.g., the Heisenberg pro-$p$ group is analytic.
Similarly to solvable pro-$p$ groups, a $p$-adic analytic pro-$p$ group does not contain a free non-abelian subgroup (cf. \cite[Cor.~8.34]{ddsms}).

Even if there are several $p$-adic analytic pro-$p$ groups which are solvable (e.g., finitely generated locally uniformly powerful pro-$p$ groups), none of these two classes of pro-$p$ groups contains the other one: e.g.,
\begin{itemize}
 \item[(a)] the wreath product $\Z_p\wr\Z_p\simeq \Z_p^{\Z_p}\rtimes\Z_p$ is a meta-abelian pro-$p$ group,
 but it is not $p$-adic analytic (cf. \cite{shalev:wreath}); 
 \item[(b)] if $G$ is a pro-$p$-Sylow subgroup of $\mathrm{SL}_2(\Z_p)$, then $G$ is a $p$-adic analytic pro-$p$ group, but it is not solvable.
\end{itemize}

In addition, it is well-known that also for the class of pro-$p$ completions of right-angled Artin pro-$p$ groups one has a Tits' alternative: the pro-$p$ completion of a right-angled Artin pro-$p$ group contains a free non-abelian subgroup unless it is a free abelian pro-$p$ group (i.e., unless the associated graph is complete) --- and thus it is locally uniformly powerful.

In \cite{cq:1smooth}, it is shown that analytic pro-$p$ groups which may complete into a 1-smooth pro-$p$ pair are locally uniformly powerful.
Therefore, after the results in \cite{cq:1smooth} and \cite{SZ:RAAGs}, and Theorem~\ref{thmA:intro}, it is natural to ask whether a Tits' alternative, analogous to Theorem~\ref{thm:titsalt} (and its generalization to Bloch-Kato pro-$p$ groups), holds also for all torsion-free 1-smooth pro-$p$ pairs.

\begin{question}\label{conj:tits sec}
 Let $\calG=(G,\theta)$ be a torsion-free 1-smooth pro-$p$ pair, and suppose that $\calG$ is not $\theta$-abelian.
 Does $G$ contain a closed non-abelian free pro-$p$ group?
\end{question}

In other words, we are asking whether there exists torsion-free 1-smooth pro-$p$ pairs $\calG=(G,\theta)$ such that $G$ is not analytic nor solvable, and yet it contains no free non-abelian subgroups.
In view of Theorem~\ref{thm:titsalt} and of the Tits' alternative for Bloch-Kato pro-$p$ groups \cite[Thm.~B]{cq:bk}, a positive answer to Question~\ref{conj:tits sec} would corroborate the Smoothness Conjecture.

Observe that --- analogously to Quesion~\ref{ques:split} --- Question~\ref{conj:tits sec} is equivalent to asking whether an absolutely torsion-free pro-$p$ group which is not abelian contains a closed non-abelian free subgroup.
Indeed, by Proposition~\ref{prop:prop} (in fact, just by \cite[Prop.~5.6]{cq:1smooth}), if $\calG=(G,\theta)$ is a torsion-free 1-smooth pro-$p$ pair and $\Ker(\theta)$ is abelian, then $\calG$ is $\theta$-abelian.

%%%%%%%%%%%%%%%%%%%%%%%%%%%%%%%%%%%%5

%%%%%%%%%%%%%%%%%%%%%%%%%%%%%%%%%%%%%%%%%%%%%%%%%%%%%%%55
%%%%%%%%%%%555
%%%%%%%%%%%%%%%%%%%%%%%%%%%%%%%%%%%%%%%%%%%%%%%%%%%%%%%%%5

\subsection*{Acknowledgement}
{\small 
The autor wishes to thank I.~Efrat, J.~Min\'a\v c, N.D. T\^an and Th.~Weigel for working together on maximal pro-$p$ Galois groups and their cohomology; and P.~Guillot and I.~Snopce for the discussions on 1-smooth pro-$p$ groups.
Also, the author wishes to thank the editors of CMB-BMC, for their
helpfulness, and the anonymous referee.
}

\begin{bibdiv}
\begin{biblist}

\bib{becker}{article}{
   author={Becker, E.},
   title={Euklidische K\"{o}rper und euklidische H\"{u}llen von K\"{o}rpern},
   language={German},
   note={Collection of articles dedicated to Helmut Hasse on his
   seventy-fifth birthday, II},
   journal={J. Reine Angew. Math.},
   volume={268/269},
   date={1974},
   pages={41--52},
   issn={0075-4102},
}

\bib{cem}{article}{
   author={Chebolu, S. K.},
   author={Efrat, I.},
   author={Mina\v{c}, J.},
   title={Quotients of absolute Galois groups which determine the entire
   Galois cohomology},
   journal={Math. Ann.},
   volume={352},
   date={2012},
   number={1},
   pages={205--221},
   issn={0025-5831},
}

\bib{cmq:fast}{article}{
   author={Chebolu, S. K.},
   author={Min\'{a}\v{c}, J.},
   author={Quadrelli, C.},
   title={Detecting fast solvability of equations via small powerful Galois
   groups},
   journal={Trans. Amer. Math. Soc.},
   volume={367},
   date={2015},
   number={12},
   pages={8439--8464},
   issn={0002-9947},,
}

\bib{craven}{article}{
   author={Craven, Th. C.},
   author={Smith, T. L.},
   title={Formally real fields from a Galois-theoretic perspective},
   journal={J. Pure Appl. Algebra},
   volume={145},
   date={2000},
   number={1},
   pages={19--36},
}

\bib{dcf:lift}{unpublished}{
   author={De Clercq, C.},
   author={Florence, M.},
   title={Lifting theorems and smooth profinite groups},
   date={2017},
   note={Preprint, available at {\tt arXiv:1711.06585}},
}

\bib{ddsms}{book}{
   author={Dixon, J. D.},
   author={du Sautoy, M. P. F.},
   author={Mann, A.},
   author={Segal, D.},
   title={Analytic pro-$p$ groups},
   series={Cambridge Studies in Advanced Mathematics},
   volume={61},
   edition={2},
   publisher={Cambridge University Press, Cambridge},
   date={1999},
   pages={xviii+368},
   isbn={0-521-65011-9},
}

\bib{ido:small}{article}{
   author={Efrat, I.},
   title={Small maximal pro-$p$ Galois groups},
   journal={Manuscripta Math.},
   volume={95},
   date={1998},
   number={2},
   pages={237--249},
   issn={0025-2611},
}

\bib{ido:book}{book}{
   author={Efrat, I.},
   title={Valuations, orderings, and Milnor $K$-theory},
   series={Mathematical Surveys and Monographs},
   volume={124},
   publisher={American Mathematical Society, Providence, RI},
   date={2006},
   pages={xiv+288},
   isbn={0-8218-4041-X},
}

\bib{eq:kummer}{article}{
   author={Efrat, I.},
   author={Quadrelli, C.},
   title={The Kummerian property and maximal pro-$p$ Galois groups},
   journal={J. Algebra},
   volume={525},
   date={2019},
   pages={284--310},
   issn={0021-8693},
}

\bib{EK}{article}{
   author={Engler, A. J.},
   author={Koenigsmann, J.},
   title={Abelian subgroups of pro-$p$ Galois groups},
   journal={Trans. Amer. Math. Soc.},
   volume={350},
   date={1998},
   number={6},
   pages={2473--2485},
   issn={0002-9947},
}

\bib{EN}{article}{
   author={Engler, A. J.},
   author={Nogueira, J.},
   title={Maximal abelian normal subgroups of Galois pro-$2$-groups},
   journal={J. Algebra},
   volume={166},
   date={1994},
   number={3},
   pages={481--505},
   issn={0021-8693},
}

\bib{HW:book}{book}{
   author={Haesemeyer, C.},
   author={Weibel, Ch.},
   title={The norm residue theorem in motivic cohomology},
   series={Annals of Mathematics Studies},
   volume={200},
   publisher={Princeton University Press, Princeton, NJ},
   date={2019},
   pages={xiii+299},
   isbn={978-0-691-19104-1},
   isbn={978-0-691-18182-0},
}

\bib{koe:solvable}{article}{
   author={Koenigsmann, J.},
   title={Solvable absolute Galois groups are metabelian},
   journal={Invent. Math.},
   volume={144},
   date={2001},
   number={1},
   pages={1--22},
   issn={0020-9910},
}

\bib{labute:demushkin}{article}{
   author={Labute, J. P.},
   title={Classification of Demushkin groups},
   journal={Canad. J. Math.},
   volume={19},
   date={1967},
   pages={106--132},
   issn={0008-414X},
}

\bib{birs}{report}{
   author={Mina\v c, J.},
   author={Pop, F.},
   author={Topaz, A.},
   author={Wickelgren, K.},
   title={Nilpotent Fundamental Groups},
   date={2017},
   note={Report of the workshop ``Nilpotent Fundamental Groups'', Banff AB, Canada, June 2017},
   eprint={https://www.birs.ca/workshops/2017/17w5112/report17w5112.pdf},
   organization={BIRS for Mathematical Innovation and Discovery},
   conference={
      title={Nilpotent Fundamental Groups 17w5112},
      address={Banff AB, Canada},
      date={June 2017}},
}

\bib{nsw:cohn}{book}{
   author={Neukirch, J.},
   author={Schmidt, A.},
   author={Wingberg, K.},
   title={Cohomology of number fields},
   series={Grundlehren der Mathematischen Wissenschaften [Fundamental
   Principles of Mathematical Sciences]},
   volume={323},
   edition={2},
   publisher={Springer-Verlag, Berlin},
   date={2008},
   pages={xvi+825},
   isbn={978-3-540-37888-4},}

\bib{cq:bk}{article}{
   author={Quadrelli, C.},
   title={Bloch-Kato pro-$p$ groups and locally powerful groups},
   journal={Forum Math.},
   volume={26},
   date={2014},
   number={3},
   pages={793--814},
   issn={0933-7741},
}

\bib{cq:1smooth}{unpublished}{
   author={Quadrelli, C.},
   title={1-smooth pro-$p$ groups and Bloch-Kato pro-$p$ groups},
   date={2019},
   note={Preprint, available at {\tt arXiv:19040.0667}},
}

\bib{cq:noGal}{article}{
   author={Quadrelli, C.},
   title={Two families of pro-$p$ groups that are not absolute Galois groups},
   journal={J. Group Theory},
   date={2021},
   note={In press},
}

\bib{cq:chase}{unpublished}{
   author={Quadrelli, C.},
   title={Chasing maximal pro-$p$ Galois groups with 1-cyclotomicity},
   date={2021},
   note={Preprint, available at {\tt arXiv:2106.00335}},
}

\bib{qw:cyc}{article}{
   author={Quadrelli, C.},
   author={Weigel, Th.},
   title={Profinite groups with a cyclotomic $p$-orientation},
   date={2020},
   journal={Doc. Math.},
   volume={25},
   pages={1881--1917},
   }

\bib{qw:bogo}{article}{
   author={Quadrelli, C.},
   author={Weigel, Th.},
   title={Pro-$\ell$ groups with the Bogomolov property},
   date={2021},
   note={Preprint, available at {\tt arXiv:2103.12438}},   }
   
\bib{shalev:wreath}{article}{
   author={Shalev, A.},
   title={Characterization of $p$-adic analytic groups in terms of wreath
   products},
   journal={J. Algebra},
   volume={145},
   date={1992},
   number={1},
   pages={204--208},
   issn={0021-8693},
}

\bib{ilirslobo}{unpublished}{
   author={Snopce, I.},
   author={Tanushevski, S.},
   title={Frattini-injectivity and maximal pro-$p$ Galois groups},
   date={2020},
   note={Preprint, available at {\tt arXiv:2009.09297}},
}

\bib{SZ:RAAGs}{unpublished}{
   author={Snopce, I.},
   author={Zalesskii, P. A.},
   title={Right-angled Artin pro-$p$-groups},
   date={2020},
   note={Preprint, available at {\tt arXiv:2005.01685}},
}

\bib{tate}{article}{
   author={Tate, J.},
   title={Relations between $K_{2}$ and Galois cohomology},
   journal={Invent. Math.},
   volume={36},
   date={1976},
   pages={257--274},
   issn={0020-9910},
}

	\bib{voev}{article}{
   author={Voevodsky, V.},
   title={On motivic cohomology with $\bold Z/l$-coefficients},
   journal={Ann. of Math. (2)},
   volume={174},
   date={2011},
   number={1},
   pages={401--438},
   issn={0003-486X},
   }
   
   \bib{ware}{article}{
   author={Ware, R.},
   title={Galois groups of maximal $p$-extensions},
   journal={Trans. Amer. Math. Soc.},
   volume={333},
   date={1992},
   number={2},
   pages={721--728},
   issn={0002-9947},
}

\bib{weibel}{article}{
   author={Weibel, Charles},
   title={2007 Trieste lectures on the proof of the Bloch-Kato conjecture},
   conference={
      title={Some recent developments in algebraic $K$-theory},
   },
   book={
      series={ICTP Lect. Notes},
      volume={23},
      publisher={Abdus Salam Int. Cent. Theoret. Phys., Trieste},
   },
   date={2008},
   pages={277--305},
   review={\MR{2509183}},
}

\bib{wurf}{article}{
   author={W\"{u}rfel, T.},
   title={On a class of pro-$p$ groups occurring in Galois theory},
   journal={J. Pure Appl. Algebra},
   volume={36},
   date={1985},
   number={1},
   pages={95--103},
   issn={0022-4049},
}
\end{biblist}
\end{bibdiv}
\end{document}